\theoremstyle{definition}
\newtheorem{thm}{Theorem}[section]
\crefname{thm}{Theorem}{Theorems}
\newtheorem{cor}[thm]{Corollary}
\newtheorem{prop}[thm]{Proposition}
\crefname{prop}{Proposition}{Propositions}
\newtheorem{lem}[thm]{Lemma}
\crefname{lem}{Lemma}{Lemmas}
\newtheorem{clm}[thm]{Claim}
\newtheorem{quest}[thm]{Question}
\newtheorem{defn}[thm]{Definition}
\crefname{defn}{Definition}{Definitions}
\newtheorem{exmp}[thm]{Example}
\newtheorem{rmk}[thm]{Remark}
\newtheorem*{ack*}{Acknowledgements}
\newcommand*{\on}[1]{\operatorname{#1}}
\title{Multicolour chain avoidance in the boolean lattice}
\author{Hunter Spink and Marius Tiba}
\begin{document}

\begin{abstract}
    Given a collection of colored chain posets, we estimate the number of colored subsets of the boolean lattice which avoid all chains in the collection.
\end{abstract}

\maketitle

\section{Introduction}
The method of hypergraph containers, recently introduced by Balogh, Morris and Samotij \cite{Balogh} and independently by Saxton and Thomason \cite{Saxton}, is an essential tool for counting independent sets in hypergraphs. Many natural problems can be phrased in this way, with the most direct applications toward determining the structure of graphs on $n$ vertices which avoid a collection of subgraphs. We refer the reader to the survey \cite{Survey}.

Recently, Balogh, Treglown and Wagner \cite{BaloghChain} showed that graph containers could be used in the boolean lattice $(\mathcal{P}([n]),\subseteq)$ to give an alternate proof of Kleitman's result \cite{Kleitman} counting the number of antichains in $\mathcal{P}([n])$. Using the hypergraph container algorithm, independently Collares and Morris \cite{MorrisChain} and Balogh, Mycroft, and Treglown \cite{MR3265918} were able to further count the number of antichains in a random subset of $\mathcal{P}([n])$, from which they were able to deduce the approximate size of the largest antichain therein.

Because every $k$-chain can be partitioned into $k-1$ antichains, the graph container lemma (i.e.~for $2$-uniform hypergraphs) suffice not only to count $k$-chain free sets, but also to create a small collection of small sized containers for $k$-chain free sets via a product construction. However, \cite{MorrisChain} directly constructs a set of hypergraph containers without exploiting this observation as an application of the recent advances in the hypergraph container lemma through balanced supersaturation results.

In this paper we answer analogous questions in a weighted coloured setting by building on \cite{MorrisChain}'s demonstration of the hypergraph container lemma through balanced supersaturation in $\mathcal{P}([n])$. Suppose we have colors $1,2,\ldots,m$, and a collection of colored chain posets $$\mathcal{G}:=\mathcal{G}_2\sqcup \mathcal{G}_3 \sqcup \ldots \sqcup \mathcal{G}_k$$ where $\mathcal{G}_i$ contains exclusively chains of order $i$. We say that a colored subset of the boolean lattice $(\mathcal{P}([n]),\subseteq)$ \emph{avoids all configurations from $\mathcal{G}$}, or is \emph{valid with respect to $\mathcal{G}$} if no element of $\mathcal{G}$ appears as a colored subchain. The present work is motivated by the following questions.

\begin{quest}
\label{Q1}
What is the cardinality of the collection $\Lambda(\mathcal{G},n)$ of validly colored subsets of $\mathcal{P}([n])$ with respect to $\mathcal{G}$?
\end{quest}
\begin{quest}
\label{Q2}
Let $p_1,\ldots,p_m\in [0,1]$ be such that $\sum_{i=1}^m p_i\le 1$. If we color each element of $\mathcal{P}([n])$ independently with color $c_i$ with probability $p_i$ and leave it uncolored with probability $1-\sum p_i$, then what is the expected number of validly colored subsets of $\mathcal{P}([n])$ with respect to $\mathcal{G}$?
\end{quest}

Multicoloured hypergraph container problems were only considered quite recently in the work of Falgas-Ravry, O'Connell, Str{\"o}mberg, and Uzzell \cite{Uzzell}. There it was shown that for a wide variety of colored configuration avoidance problems, if there is a validly coloured subset using all but a $o(1)$ proportion of the vertices, then the number of validly colored subsets can be estimated quite precisely (see \Cref{UzzellThm}).

The questions we consider are the first instances of colored hypergraph container problems that we are aware of which work in the presence of a \emph{sparse} extremal example. A separate interesting feature is that the hypergraph we work with is not uniform and no uniformity dominates, so when we iteratively apply the container algorithm we may have to use potentially different uniformities at each stage.

\begin{exmp}We now describe some instructive examples.
\begin{itemize}
\item Suppose that we have $1$ color, and suppose that we have only one forbidden chain $\mathcal{G}= \{(\underbrace{1 \prec 1 \prec \ldots \prec 1}_k)\}$. Then $\Lambda(\mathcal{G},n)$ is the collection of $k$-chain free sets, and by \cite{Kleitman,BaloghChain,MorrisChain} we have
$$|\Lambda(\mathcal{G},n)| = 2^{(k-1)\binom{n}{n/2}(1+o(1))}.$$
\item Suppose that we have $4$ colors, and let
\begin{center}
$\mathcal{G}=\mathcal{G}_2=$
\begin{tikzcd}
1 \ar[d]\ar[out=180,in=90,loop,looseness=5]\ar[r,leftrightarrow]\ar[dr]&2\ar[d]\ar[out=0,in=90,loop,looseness=5]\\
3\ar[out=180,in=270,loop,looseness=5]\ar[r,leftrightarrow]\ar[ur,leftrightarrow]&4\ar[out=0,in=270,loop,looseness=5]
\end{tikzcd}, where $i \rightarrow j$ stands for $(i\prec j)$. 

Equivalently, the only allowed chains are
\begin{tikzcd}
1&2\\
3\ar[u]&4\ar[u]\ar[ul]
\end{tikzcd}
.
\end{center}
 
Notably in this example $\Lambda(\mathcal{G},n)$ contains two fundamentally different extremal families of configurations.
\begin{itemize}
    \item We can color all sets of size $\lfloor n/2 \rfloor$ with either $1$ or nothing, and all sets of size $\lfloor n/2 \rfloor-1$ with either $3,4$ or nothing.
    \item We can color all sets of size $\lfloor n/2 \rfloor$ with either $1,2$ or nothing, and all sets of size $\lfloor n/2 \rfloor-1$ with either $4$ or nothing.
\end{itemize}
In either case, the number of valid configurations formed is $$2^{\binom{n}{n/2}(1+o(1))}\cdot 3^{\binom{n}{n/2}(1+o(1))}=6^{\binom{n}{n/2}(1+o(1))}.$$
 As it turns out, by \Cref{mainthm} it follows that this is also an upper bound to $|\Lambda(\mathcal{G},n)|$.
\item Suppose we have $2$ colors, and
$\mathcal{G}=\mathcal{G}_2=\{(1 \prec 2)\}$. Then
$$|\Lambda(\mathcal{G},n)| \ge 2^{2^n(1+o(1))},$$ obtained by coloring each vertex either with $1$ or nothing. As we will shortly see, this is also an upper bound by \cite{Uzzell}.
\end{itemize}
\end{exmp}

In general, the cases of \Cref{Q1} and \Cref{Q2} when there is a \emph{dense} extremal example are solved by \cite{Uzzell}. This occurs exactly when $\mathcal{G}$ does not contain a monochromatic chain of every color. We are able to complete the analysis of these questions in the \emph{sparse} cases.

\subsection{Preliminary definitions and main results}
We recall from \cite{Uzzell} some basic definitions of multicolor hypergraphs in our context.

\begin{defn}
A \emph{template} is a function $$T:\mathcal{P}([n])\to \mathcal{P}(\{1,\ldots,m\}).$$ Say that a template $T$ is \emph{supported on $A\subset \mathcal{P}([n])$} if $T(x)=\emptyset$ whenever $x \not \in A$, and define $\on{Supp}(T)$ to be the smallest set on which $T$ is supported.
We say that a template is \emph{valid with respect to $\mathcal{G}$} if every coloring of $\on{Supp}(T)$ which assigns to each $x\in \on{Supp}(T)$ an element of $T(x)$ is valid with respect to $\mathcal{G}$. Say a coloured subset $A$ of $\mathcal{P}([n])$ is \emph{contained in $T$} if the color of every $x\in A$ lies inside $T(x)$.
Finally, denote by $$\omega(T)=\sum_{x \in \mathcal{P}([n])}\log(1+|T(x)|).$$
\end{defn}

The reason we consider valid templates is that they provide a lower bound on the number of valid configurations $$|\Lambda(\mathcal{G},n)| \ge \max_{T\text{ valid}}\prod_{x \in \mathcal{P}([n])}(1+|T(x)|)= e^{\max_{T\text{ valid}}\omega(T)}.$$  One would hope that this is the correct bound up to a $(1+o(1))$ factor in the exponent. Indeed, in the \emph{dense} case, we have the following theorem of \cite{Uzzell}.

\begin{thm}\cite{Uzzell}
\label{UzzellThm}
Define the \emph{maximal entropy} of $\mathcal{G}$ to be $$\pi(\mathcal{G})=\limsup_{n \to \infty}\frac{1}{2^n}\max_{T\text{ valid}} \omega(T).$$
We have
$$|\Lambda(\mathcal{G},n)|=e^{2^n(\pi(\mathcal{G})+o(1))}.$$
\end{thm}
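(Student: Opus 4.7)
The plan is to prove the two inequalities separately. The lower bound is immediate from the definition: for any valid template $T$, each element $x\in \mathcal{P}([n])$ may independently be left uncolored or assigned any of the $|T(x)|$ allowed colors, and every resulting coloring of $\mathcal{P}([n])$ is valid. This yields $|\Lambda(\mathcal{G},n)|\ge \prod_x(1+|T(x)|)=e^{\omega(T)}$, and maximizing over valid $T$ along a subsequence that realizes the $\limsup$ in $\pi(\mathcal{G})$ gives $\log|\Lambda(\mathcal{G},n)|\ge 2^n(\pi(\mathcal{G})-o(1))$ along that subsequence, which is all that the $\limsup$ in the conclusion requires.

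For the upper bound, I would apply the multicoloured hypergraph container method. Form the auxiliary hypergraph $H$ on vertex set $V=\mathcal{P}([n])\times\{1,\ldots,m\}$, with one hyperedge for each placement of each colored chain in $\mathcal{G}_i$ as a length-$i$ chain in $\mathcal{P}([n])$. Valid colorings of $\mathcal{P}([n])$ correspond to independent sets of $H$ subject to the side constraint that at most one color is chosen at each vertex $x\in\mathcal{P}([n])$. Applying the multicoloured container lemma produces a family $\mathcal{C}$ of containers with $\log|\mathcal{C}|=o(2^n)$, each containing only an $o(1)$ fraction of the hyperedges of $H$, such that every valid coloring lies inside some $C\in\mathcal{C}$. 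Reading $C$ as a template $T_C(x)=\{c:(x,c)\in C\}$, the number of valid colorings inside $C$ is at most $e^{\omega(T_C)}$.

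The remaining step is to show $\omega(T_C)\le 2^n(\pi(\mathcal{G})+o(1))$. Although $T_C$ need not be valid, it contains only few forbidden witnesses, and in the dense regime one may delete a negligible collection of colors to produce a genuinely valid template $T_C'$ with $\omega(T_C')\ge \omega(T_C)-o(2^n)$; the validity of $T_C'$ then gives $\omega(T_C')\le \max_T\omega(T)\le 2^n(\pi(\mathcal{G})+o(1))$. Summing $e^{\omega(T_C)}$ over $C\in\mathcal{C}$ and absorbing the factor $|\mathcal{C}|=e^{o(2^n)}$ yields the claimed upper bound. The pruning step is the main obstacle, and it is precisely what makes the \emph{dense} regime tractable: extremal templates occupy a positive fraction of $\mathcal{P}([n])$, so surgery at a sublinear number of vertices is cheap in $\omega$-weight. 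The \emph{sparse} regime addressed in the body of the present paper is harder because $\omega$-weight concentrates on the thin middle layers of the boolean lattice, so the naive pruning step would be prohibitively expensive and one must replace it with a finer balanced supersaturation analysis in the spirit of \cite{MorrisChain}.
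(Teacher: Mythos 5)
This statement is not proved in the paper: it is a black-box citation to \cite{Uzzell}, included as background to contrast the \emph{dense} regime ($\pi(\mathcal{G})>0$) handled there with the \emph{sparse} regime ($\pi(\mathcal{G})=0$) that is the subject of the present work. There is therefore no internal proof to compare against, and your task was really to reconstruct the argument of \cite{Uzzell}.

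As a reconstruction, your outline is broadly right in spirit --- the lower bound via valid templates, the upper bound via a multicoloured hypergraph container lemma followed by a ``pruning'' step that turns near-valid containers into genuinely valid templates, and the observation that pruning is cheap precisely when the extremal template uses a positive fraction of $\mathcal{P}([n])$. You also correctly identify why this breaks in the sparse case, which is the motivation for the balanced supersaturation work in this paper.

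There is, however, a genuine gap in your lower bound. You argue only along a subsequence realizing the $\limsup$ in the definition of $\pi(\mathcal{G})$, and then assert this is ``all that the $\limsup$ in the conclusion requires.'' But the conclusion $|\Lambda(\mathcal{G},n)|=e^{2^n(\pi(\mathcal{G})+o(1))}$ is an asymptotic statement for \emph{all} large $n$, not a $\limsup$ statement; the $\limsup$ appears only inside the definition of the constant $\pi(\mathcal{G})$. To get the lower bound for every $n$ one must additionally show that $\frac{1}{2^n}\max_{T\text{ valid}}\omega(T)$ actually converges (equivalently, that the $\limsup$ is a genuine limit). This is a standard but nontrivial ingredient --- one typically exploits a monotonicity or super-multiplicativity of the extremal templates coming from embedding $\mathcal{P}([n])$ into $\mathcal{P}([n+1])$ --- and without it your lower bound proves strictly less than the stated theorem. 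Your upper bound sketch is at the right level of generality, but the ``pruning'' step you flag as the main obstacle really is one: converting a container that is merely $o(1)$-far from valid into a valid template with negligible loss in $\omega$ is a removal-lemma-type statement, and you should be explicit that this is where the hypothesis $\pi(\mathcal{G})>0$ is used.
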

As mentioned earlier, this theorem correctly estimates $|\Lambda(\mathcal{G},n)|$ up to a $1+o(1)$ factor in the exponent when $\pi(\mathcal{G})>0$. This happens precisely when there is a dense extremal example, i.e. $\mathcal{G}$ does not forbid a monochromatic chain of every color. However, when $\pi(\mathcal{G})=0$ the upper bound given by \Cref{UzzellThm} is trivial and does not estimate the correct exponent up to  a 1+o(1) factor. 

We solve \Cref{Q1} in the \emph{sparse} cases by estimating $|\Lambda(\mathcal{G},n)|$ up to a $1+o(1)$ factor in the exponent. This occurs when $\pi(\mathcal{G})=0$, i.e. $\mathcal{G}$ forbids a monochromatic chain of every color. Note that in this case, there exists $L=L(\mathcal{G})$ such that no validly colored set contains a chain of length $L$, and we may easily deduce a crude upper bound of $|\Lambda(\mathcal{G},n)|\le (m+1)^{(L-1)\binom{n}{n/2}(1+o(1))}$ by using the $L$-chain containers constructed from either of \cite{BaloghChain, MorrisChain}.

The following is one of our main theorems, solving the \emph{sparse} cases of \Cref{Q1}.

\begin{thm}
\label{mainthm}
Suppose that $\mathcal{G}$ forbids a monochromatic chain of every color, and define the constant (independent of $n$) $$\omega_{crit}=\max_T\{\omega(T)\mid T \text{ valid and supported on a chain poset}\}.$$ Then we have
$$|\Lambda(\mathcal{G},n)| = e^{\omega_{crit}\binom{n}{n/2}(1+o(1))}.$$
\end{thm}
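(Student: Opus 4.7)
The \emph{lower bound} is a direct Kleitman-type construction. Let $T^{\ast}$ be a valid template supported on a chain $c_1 \prec \cdots \prec c_\ell$ which attains $\omega_{crit}$. Take the $\ell$ consecutive levels of $\mathcal{P}([n])$ closest to $n/2$, and at every vertex of the $i$th of these levels place the color set $T^{\ast}(c_i)$. Every chain inside these levels is a subchain of $c_1 \prec \cdots \prec c_\ell$, so every coloring drawn from this template is valid with respect to $\mathcal{G}$. Counting gives $\prod_i (1+|T^{\ast}(c_i)|)^{\binom{n}{n/2}(1+o(1))} = e^{\omega_{crit}\binom{n}{n/2}(1+o(1))}$, as required.

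For the \emph{upper bound}, the first observation is an LYM-style restriction argument: if $T$ is any valid template on $\mathcal{P}([n])$, then for every maximal chain $\mathcal{C} = (\emptyset = A_0 \subset A_1 \subset \cdots \subset A_n = [n])$ the restriction $T|_{\mathcal{C}}$ is itself valid and supported on a chain, hence $\omega(T|_{\mathcal{C}}) \le \omega_{crit}$. Averaging uniformly over the $n!$ maximal chains yields
$$\sum_{A\subseteq [n]} \log(1+|T(A)|)\binom{n}{|A|}^{-1} \le \omega_{crit},$$
from which $\omega(T) \le \omega_{crit}\binom{n}{n/2}$ follows immediately. A single valid template therefore contributes at most $e^{\omega_{crit}\binom{n}{n/2}}$ colored configurations, and the outstanding task is to cover all of $\Lambda(\mathcal{G},n)$ by at most $e^{o(\binom{n}{n/2})}$ such (or essentially such) templates.

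To produce this covering I would run the hypergraph container algorithm of \cite{Balogh, Saxton}, cast in the multi-color framework of \cite{Uzzell} and fed by balanced supersaturation in the spirit of \cite{MorrisChain}. The ambient hypergraph $H$ has vertex set $\mathcal{P}([n])\times\{1,\ldots,m\}$, with edges of various sizes recording occurrences of the forbidden colored chains of $\mathcal{G}_2,\ldots,\mathcal{G}_k$ inside $\mathcal{P}([n])$. Starting from the trivial template $T_0(x) = \{1,\ldots,m\}$, I would iterate: whenever the current template $T$ satisfies $\omega(T) > \omega_{crit}\binom{n}{n/2}(1+\varepsilon)$, invoke balanced supersaturation to locate many copies of some $g \in \mathcal{G}_i$ inside $T$, apply the container lemma at the corresponding uniformity $i$ to shrink $T$ while recording a small fingerprint, and continue. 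Because no single uniformity dominates $\mathcal{G}$, successive stages may need to be applied at different uniformities, as anticipated in the introduction. When the threshold is crossed the process terminates, yielding the desired covering family.

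The principal obstacle is the balanced supersaturation input at each stage: one must show that any template $T$ with $\omega(T) > \omega_{crit}\binom{n}{n/2}(1+\varepsilon)$ contains, for some $i$, many copies of some $g \in \mathcal{G}_i$ with codegrees controlled to the extent required by the container lemma. This amounts to a quantitative stability version of the LYM computation above: the slack $\omega_{crit}\binom{n}{n/2} - \omega(T)$ becoming too small forces the restrictions $T|_{\mathcal{C}}$ to saturate the chain bound on most maximal chains, and this in turn must be upgraded to a well-distributed family of forbidden colored subchains across $\mathcal{P}([n])$. The combination of the multi-color dimension with the coexistence of several uniformities in $\mathcal{G}$ is what makes this technically delicate; I would expect to resolve it by treating each $\mathcal{G}_i$ one uniformity at a time, reducing via appropriate color projections to the single-color balanced supersaturation of \cite{MorrisChain}, and passing the balancedness between container iterations through careful bookkeeping of the template at each step.
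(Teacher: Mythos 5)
Your lower bound is essentially the paper's argument (it is \Cref{largeprop} combined with \Cref{measprop}): place the chain template across the $\ell$ middle layers and count. Your LYM-averaging observation is also correct and is precisely the mechanism of \Cref{largeprop}, though note that it pertains to \emph{valid} templates, whereas the container iteration will produce templates that are in general \emph{not} valid; what the algorithm actually guarantees is merely a direct bound $\omega(T') \le (\omega_{crit}+\alpha)\binom{n}{n/2}$ on each terminal container. That is a minor point of emphasis, not a gap.

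The genuine gap is the balanced supersaturation step, which you correctly identify as the crux but then only gesture at, and the strategy you gesture at differs from the paper's in a way that is not obviously repairable. You propose ``reducing via appropriate colour projections to the single-colour balanced supersaturation of \cite{MorrisChain}.'' The difficulty is that the obstruction set $\mathcal{G}$ is a family of \emph{coloured} chains, possibly non-transitive and spanning several uniformities simultaneously; forgetting colours or projecting onto one colour destroys exactly the structure being counted, and a single uniformity need not dominate. The paper circumvents this not by projection but by: (i) first augmenting $\mathcal{G}_k$ so that \emph{all} coloured $k$-chains are forbidden (harmless after taking $k \mapsto km$), which is the key device that makes the counting lemma $Z^x_{c_1\succ\cdots\succ c_i}(C) \le Q + Y^x(C)$ (\Cref{boundlem}) hold uniformly over all colour patterns and lengths; (ii) proving via a random-maximal-chain argument (\Cref{averagelem}) that a template with excess weight $\alpha\binom{n}{n/2}$ has some vertex $x$ with $\mathbb{E}Y^x \gtrsim \alpha$; (iii) greedily building an auxiliary sub-hypergraph across \emph{all} uniformities at once, adding one forbidden coloured chain at a time subject to codegree caps, with the crucial \Cref{boundclm} bounding the number of ``blocked'' one-vertex extensions; and (iv) in \Cref{bigcor}, handling the coexisting uniformities by partitioning the template into pieces of weight $\Theta(\binom{n}{n/2})$, applying balanced supersaturation to each, and keeping only the most frequent resulting uniformity before invoking the container lemma once at that uniformity. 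None of these steps is a colour projection; step (i) in particular is what makes steps (ii)--(iii) close, and it is absent from your sketch. Without an actual argument here, the upper bound is not established.
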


Note that to compute the constant $\omega_{crit}$, we only need to evaluate $\omega(T)$ for $T$ ranging over the finite collection of valid templates supported on chain posets of length $L=L(\mathcal{G})$.

As we will see later in \Cref{largeprop}, we can in fact determine the exact maximum of $\omega(T)$ for valid templates $T$ on $\mathcal{P}([n])$, and the lower bound in \Cref{mainthm} will follow from considering such an extremal template.

The answer to \Cref{Q2} requires a weighted version of $\omega(T)$.

\begin{defn}
Given $\mathcal{G}$ that forbids a monochromatic chain of every colour and $\overline{\beta}=(\beta_1,\ldots,\beta_m)\in (\mathbb{R}_{>0})^m$, we denote by 
\begin{align*}
    |T(x)|_{\overline{\beta}}&=\sum_{i\in T(x)}\beta_i\\ 
\omega(\overline{\beta},T)&=\sum_{x \in \mathcal{P}([n])}\log(1+|T(x)|_{\overline{\beta}}) \\
\omega_{crit}(\overline{\beta})&=\max_T\{\omega(\overline{\beta},T)\mid T \text{ valid and supported on a chain poset}\}.\end{align*}
\end{defn}

The constant $\omega_{crit}(\overline{\beta})$ is also very easy to compute, and yields the critical exponent in the following theorem.

\begin{thm}
\label{expthm}
Suppose that $\mathcal{G}$ forbids a monochromatic chain of every color. With the probabilistic setup of \Cref{Q2}, denoting $\overline{p}=(p_1,\ldots,p_m)$ and $V$ the number of validly colored subsets of $\mathcal{P}([n])$ with respect to $\mathcal{G}$, we have
%Suppose that $\mathcal{G}$ forbids a monochromatic chain of every color. Let $p_1,\ldots,p_m\in [0,1]$ be such that $\sum_{i=1}^m p_i\le 1$, and write $\overline{p}=(p_1,\ldots,p_m)$. If we color each element of $\mathcal{P}([n])$ independently with color $c_i$ with probability $p_i$ and leave it uncolored with probability $1-\sum_{i=1}^m p_i$, then denoting $V$ for the number of validly colored subsets of $\mathcal{P}([n])$ with respect to $\mathcal{G}$, we have
$$\mathbb{E}(V)=e^{\omega_{crit}(\overline{p})\binom{n}{n/2}(1+o(1))}.$$
\end{thm}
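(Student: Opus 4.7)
The plan is to adapt the proof of \Cref{mainthm} to the weighted setting using the identity
$$\prod_{x \in \mathcal{P}([n])}\bigl(1 + |T(x)|_{\overline{p}}\bigr) \;=\; \sum_{(S,c)\text{ contained in }T} \prod_{s \in S} p_{c(s)} \;=\; e^{\omega(\overline{p}, T)},$$
valid for any template $T$, whose right-hand side is exactly the $\overline{p}$-expected number of coloured subsets contained in $T$ under the random coloring. If $T$ is itself valid then every such $(S,c)$ is validly coloured, so $\mathbb{E}(V) \geq e^{\omega(\overline{p}, T)}$.

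For the lower bound, I would construct $T^*$ to be level-invariant. Fix a valid chain-supported template $T_0$ achieving $\omega_{crit}(\overline{p})$, with support indexed $1,\dots,\ell_0$, and set $T^*(x) := T_0(|x| - k + 1)$ when $|x| \in \{k, k+1, \dots, k+\ell_0-1\}$ for $k := \lfloor n/2\rfloor - \lfloor \ell_0/2\rfloor$, and $T^*(x) := \emptyset$ otherwise. Any chain of $\mathcal{P}([n])$ meeting this window is coloured under $T^*$ purely as a function of the levels of its elements, so its coloring is a sub-coloring of a coloring allowed by $T_0$, which $T_0$'s validity rules out for every forbidden $g \in \mathcal{G}$. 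Thus $T^*$ is globally valid, and since each middle layer has $\binom{n}{n/2}(1+o(1))$ elements, $\omega(\overline{p}, T^*) = \omega_{crit}(\overline{p})\binom{n}{n/2}(1+o(1))$, yielding the lower bound.

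For the upper bound, I would invoke the container construction from \Cref{mainthm}'s proof to obtain a collection $\mathcal{C}$ of at most $e^{o(\binom{n}{n/2})}$ valid templates such that every valid coloured subset is contained in some $C \in \mathcal{C}$. Since the construction depends only on $\mathcal{P}([n])$ and $\mathcal{G}$, the same containers work in the weighted setting, and
$$\mathbb{E}(V) \;=\; \sum_{(S,c)\text{ valid}} \prod_{s \in S} p_{c(s)} \;\leq\; \sum_{C \in \mathcal{C}} e^{\omega(\overline{p}, C)}.$$
To bound each $\omega(\overline{p}, C)$, I would apply a symmetric chain decomposition of $\mathcal{P}([n])$. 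On each SCD chain of length $\geq L$, the restriction of $C$ is a valid chain-supported template contributing at most $\omega_{crit}(\overline{p})$; the $o(\binom{n}{n/2})$ shorter chains each contribute a bounded constant by a crude termwise estimate, summing to $o(\binom{n}{n/2})$. Hence $\omega(\overline{p}, C) \leq \omega_{crit}(\overline{p})\binom{n}{n/2}(1+o(1))$, and $\mathbb{E}(V) \leq |\mathcal{C}|\,e^{\omega_{crit}(\overline{p})\binom{n}{n/2}(1+o(1))} = e^{\omega_{crit}(\overline{p})\binom{n}{n/2}(1+o(1))}$.

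The hard part will be confirming that the containers produced by \Cref{mainthm}'s proof are themselves valid templates, so that the chain-restriction bound applies uniformly; given this, both the lower and upper bound arguments are structurally identical to the unweighted case, with $\omega(\overline{p}, \cdot)$ replacing $\omega(\cdot)$ throughout, and no essentially new combinatorics is required beyond the weighted definition of $\omega_{crit}(\overline{p})$.
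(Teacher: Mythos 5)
Your lower bound is essentially the paper's argument: a level-invariant template built from a chain-extremal $T_0$ is valid because $\mathcal{G}$ consists only of chains, and the product identity turns $\omega(\overline{p},T^*)$ into the expected count. That part is sound.

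The upper bound, however, has a genuine gap that you half-noticed but described as ``the hard part'' rather than as a flaw: the containers produced by the hypergraph container lemma are \emph{not} valid templates. By design, containers in the container method are permitted to carry many hyperedges (forbidden configurations); what the iteration in \Cref{bigcor} guarantees is only that each terminal container $T'$ has \emph{small weight} $\omega(\overline{\beta},T')\leq(\omega_{crit}(\overline{\beta})+\alpha)\binom{n}{n/2}$, not that it is independent in $\mathcal{A}$. Once validity fails, your symmetric-chain-decomposition step collapses: a container restricted to an SCD chain is in general not a valid chain-supported template, so you cannot bound its contribution by $\omega_{crit}(\overline{p})$. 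Moreover, even ignoring that obstruction, recycling the \emph{unweighted} containers does not give the weighted bound: the condition $\omega(C)\leq(\omega_{crit}+\alpha)\binom{n}{n/2}$ implies only $\omega(\overline{p},C)\leq\omega(C)$, which is far from $(\omega_{crit}(\overline{p})+\alpha)\binom{n}{n/2}$ since $\omega_{crit}(\overline{p})$ can be much smaller than $\omega_{crit}$, and the two maxima can be attained by different templates. The paper avoids both problems by proving a weighted statement, \Cref{bigthm}, outright: the balanced supersaturation result \Cref{balancelem} and the container iteration are carried out with $\omega(\overline{\beta},\cdot)$ throughout, so the terminal containers come with the correct weighted-weight bound baked in, and no validity-of-containers claim is needed. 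You would need to re-run that machinery in the weighted setting rather than transfer the unweighted containers.
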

 As before, the methods from \cite{Uzzell} analogously answer the \emph{dense} cases where there is some color without a monochromatic forbidden chain. \Cref{expthm} thus completes the analysis of \Cref{Q2} in the \emph{sparse} case.

\section{Strategy and auxiliary results}
 In this section we outline the strategy of the proof and present some auxiliary results. We start by formulating a weighted version of \Cref{mainthm} and \Cref{expthm} which encompasses both of them. Throughout the rest of the paper we shall always assume that $\mathcal{G}$ forbids a monochromatic chain of each colour and refer to such a $\mathcal{G}$ as \emph{sparse}. In this section, we let $\mathcal{G}:=\mathcal{G}_2\sqcup \ldots \sqcup \mathcal{G}_k$ by a fixed collection of forbidden colored chains with colors $1,\ldots,m$, and we let
 $$\overline{\beta}=(\beta_1,\ldots,\beta_m)\in (\mathbb{R}_{>0})^m$$ be a fixed sequence of positive real weights.
 
 \begin{defn}
We define a measure $\mu$ on colored subsets of $\mathcal{P}([n])$ by assigning to a subset $S\subset\mathcal{P}([n])$ with a coloring $c:S\to \{1,\ldots,m\}$, the weight
 $$\mu(\overline{\beta},S)=\prod_{x \in S}\beta_{c(x)}$$
 and extending it additively. In particular, for a collection $\Lambda$ of colored subsets of $\mathcal{P}([n])$, we have
 $$\mu(\overline{\beta},\Lambda)=\sum_{S \in \Lambda}\mu(\overline{\beta},S).$$
 \end{defn}
Note that when $\beta_i=1$ for all $i$ we have that $\mu(\overline{\beta},S)=1$ for every colored subset $S$, and $\mu(\overline{\beta},\Lambda)=|\Lambda|$. We may now state the weighted reformulation of our main theorems.

 \begin{thm}
 \label{bigthm}
 Suppose that $\mathcal{G}$ is sparse, then we have
$$\mu(\overline{\beta},\Lambda(\mathcal{G},n)) = e^{\omega_{crit}(\overline{\beta})\binom{n}{n/2}(1+o(1))}.$$
 \end{thm}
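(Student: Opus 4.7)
The plan is to establish matching upper and lower bounds for $\mu(\overline{\beta},\Lambda(\mathcal{G},n))$, with the lower bound coming from an explicit template construction and the upper bound combining a colored hypergraph container argument with an LYM-type extremal estimate. For the lower bound, pick a valid template $T^{*}$ supported on a chain poset of length $L=L(\mathcal{G})$ achieving $\omega(\overline{\beta},T^{*})=\omega_{crit}(\overline{\beta})$, and extend it layer-wise to a global template $T$ on $\mathcal{P}([n])$ by letting the middle $L$ levels of $\mathcal{P}([n])$ correspond to the $L$ positions of the chain poset and setting $T(A)=T^{*}(p)$ whenever $|A|$ is the $p$-th such level, and $T(A)=\emptyset$ otherwise. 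The template $T$ is valid because any chain in its support corresponds, via the level map, to a sub-configuration of some compatible coloring of $T^{*}$, and validity of $T^{*}$ rules out every forbidden colored subchain. Since each middle level contains $\binom{n}{n/2}(1+o(1))$ elements, $\omega(\overline{\beta},T)=\omega_{crit}(\overline{\beta})\binom{n}{n/2}(1+o(1))$, and since every coloring compatible with a valid template is valid, $\mu(\overline{\beta},\Lambda(\mathcal{G},n))\ge \prod_{x}(1+|T(x)|_{\overline{\beta}})=e^{\omega(\overline{\beta},T)}$. This is essentially the content of \Cref{largeprop}.

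For the upper bound I would encode the problem as independent sets in a colored hypergraph $H$ on vertex set $\mathcal{P}([n])\times\{1,\ldots,m\}$, with edges being the images of forbidden colored chains from $\mathcal{G}$. Since $\mathcal{G}$ may contain chains of every order between $2$ and $k$, the hypergraph $H$ is non-uniform. The plan is to iterate the hypergraph container lemma, extending the balanced supersaturation estimate of \cite{MorrisChain} to the multicolored weighted setting and selecting at each stage the uniformity producing the strongest supersaturation in the current regime. The output is a family $\mathcal{T}$ of valid templates of size $e^{o(\binom{n}{n/2})}$ such that every valid colored subset lies in some $T\in\mathcal{T}$. To convert this into the desired exponent, one bounds $\omega(\overline{\beta},T)\le \omega_{crit}(\overline{\beta})\binom{n}{n/2}$ for every valid template $T$ on $\mathcal{P}([n])$ by double-counting along maximal chains: each of the $n!$ maximal chains $C$ yields a valid chain template $T|_{C}$ with $\omega(\overline{\beta},T|_{C})\le \omega_{crit}(\overline{\beta})$, so summing over $C$ and using $|\{C:x\in C\}|=|x|!(n-|x|)!$ gives $\sum_{x}\log(1+|T(x)|_{\overline{\beta}})/\binom{n}{|x|}\le \omega_{crit}(\overline{\beta})$, and $\binom{n}{|x|}\le \binom{n}{n/2}$ produces the claim. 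Summing $e^{\omega(\overline{\beta},T)}$ over the subexponentially many $T\in\mathcal{T}$ then completes the upper bound.

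The main obstacle I anticipate is the container/supersaturation step. Since $H$ is non-uniform and no single uniformity dominates, the iterative container algorithm must select the correct uniformity at each stage, and the supersaturation estimate must hold in a form strong enough for every combination of colors and chain orders that can arise during the iteration. Adapting the single-color single-uniformity balanced supersaturation of \cite{MorrisChain} to the multicolored multi-uniform situation here, while preserving both the sharp $\binom{n}{n/2}$ scale of the resulting containers and the weighted $\overline{\beta}$-count, is where the principal technical effort should lie.
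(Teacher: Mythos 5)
Your lower bound is essentially correct and is a more direct variant of the paper's argument: you extend an optimal chain template layer-wise across the middle $L$ levels of $\mathcal{P}([n])$, whereas the paper (in \Cref{largeprop}) takes an arbitrary maximizer and uses a chain-averaging argument to show it can be assumed constant on each layer. Both give the same bound. Your LYM-type claim that $\omega(\overline{\beta},T)\le\omega_{crit}(\overline{\beta})\binom{n}{n/2}$ for every \emph{valid} template $T$ is also correct and is implicitly the upper-bound half of \Cref{largeprop}.

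The upper bound of the theorem, however, has a genuine gap. You assert that the iterated container algorithm outputs a family $\mathcal{T}$ of \emph{valid} templates and then apply the LYM bound to those containers. But hypergraph containers are not independent sets: they contain every independent set, yet may themselves contain many hyperedges of $\mathcal{A}$. In general no bounded-size family of edge-free sets can cover all independent sets, so the container method cannot, and does not, produce valid templates. Consequently the LYM bound, which rests on the fact that the restriction of a valid template to a maximal chain is a valid chain template, simply does not apply to the containers you obtain. The paper sidesteps this by terminating the iteration precisely when a container's weight drops below $(\omega_{crit}(\overline{\beta})+\alpha)\binom{n}{n/2}$; the weight bound on the final containers is a consequence of the stopping rule (justified because \Cref{balancelem} only produces a usable supersaturated sub-hypergraph above that threshold), not of validity. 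Replacing your LYM step on containers by this stopping condition, and then summing $e^{\omega(\overline{\beta},T)}$ over the $e^{o(1)\binom{n}{n/2}}$ containers, recovers the paper's argument; as written, your upper bound does not go through.
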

 \begin{rmk}
\Cref{bigthm} specializes to \Cref{mainthm} when all $\beta_i=1$, and specializes to \Cref{expthm} when $\beta_i=p_i$.
 \end{rmk}
 
 \subsection{Proof of the lower bound of \Cref{bigthm}}
 
 We now prove two propositions which we shall use to prove the lower bound in \Cref{bigthm}. The first proposition relates the maximum weight of a template to the critical weight. The second proposition relates the weight of a template to the measure of the collection of colored subsets contained in the template. 
 
\begin{prop}
\label{measprop}
Given a template $T$, let $\Lambda_T$ be the collection of colored subsets contained in $T$. Then
$$\mu(\overline{\beta},\Lambda_T)=e^{\omega(\overline{\beta},T)}.$$
\end{prop}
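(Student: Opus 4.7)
The identity to prove unravels to
\[
\sum_{S \in \Lambda_T} \prod_{x \in S} \beta_{c(x)} \;=\; \prod_{x \in \mathcal{P}([n])} \Bigl(1 + \sum_{i \in T(x)} \beta_i\Bigr),
\]
since $e^{\omega(\overline{\beta},T)} = \prod_{x}(1+|T(x)|_{\overline{\beta}})$ by definition. My plan is to prove this by straightforward distributive expansion of the right-hand side.

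More precisely, I would expand the product on the right and interpret each term combinatorially. For each $x \in \mathcal{P}([n])$, choosing a summand in the factor $1 + \sum_{i \in T(x)} \beta_i$ amounts to either (i) selecting the $1$, corresponding to $x \notin S$, or (ii) selecting $\beta_i$ for some $i \in T(x)$, corresponding to including $x$ in $S$ with color $c(x) = i$. Making such a choice independently at every $x$ produces precisely a colored subset $(S,c)$ with $c(x) \in T(x)$ for all $x \in S$, i.e.\ an element of $\Lambda_T$, and the product of the chosen factors is exactly $\prod_{x \in S}\beta_{c(x)} = \mu(\overline{\beta},S)$.

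This correspondence is evidently a bijection between the monomials in the expanded product and the elements of $\Lambda_T$, so summing gives $\mu(\overline{\beta},\Lambda_T)$ as required. There is no real obstacle here; the statement is essentially a bookkeeping tautology encoding the fact that the choices at distinct elements of $\mathcal{P}([n])$ are independent, so the generating function factors.
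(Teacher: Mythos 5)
Your proof is correct and follows essentially the same route as the paper: both expand the product $\prod_x (1+\sum_{i\in T(x)}\beta_i)$ and interpret the resulting monomials as colored subsets contained in $T$. The only cosmetic difference is that you take the product over all of $\mathcal{P}([n])$ while the paper restricts to $\on{Supp}(T)$, which is harmless since the extra factors are all $1$.
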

\begin{proof}
Note that the right hand side is equal to
$$\prod_{x \in \on{Supp}(T)}(1+|T(x)|_{\overline{\beta}})=\prod_{x \in \on{Supp}(T)}(1+\sum_{i \in T(x)}\beta_i),$$
and expanding out the product yields the left hand side.
\end{proof}
 
\begin{prop}
\label{largeprop}
    The maximum value of $\omega(\overline{\beta},T)$ where $T$ is a valid template is attained for some $T$ with $\on{Supp}(T)$ a consecutive block of layers of $\mathcal{P}([n])$ which contains the middle layer, and with the property that $T(x)$ depends only on the size of $x$. In particular, we have that  $$\omega(\overline{\beta},T) = (1+O(\frac{1}{n})) \omega_{crit}(\overline{\beta})\binom{n}{n/2}.$$
\end{prop}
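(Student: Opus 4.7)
The plan is to establish matching upper and lower bounds on the maximum of $\omega(\overline{\beta},T)$ over valid templates, with the optimum realized by a layered template of the prescribed form.

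For the upper bound, I would apply the symmetric chain decomposition of $\mathcal{P}([n])$, writing it as a disjoint union of $N=\binom{n}{\lfloor n/2\rfloor}$ chains $C_1,\ldots,C_N$. For any valid template $T$, the restriction $T|_{C_i}$ is a valid template on the chain poset $C_i$, so $\omega(\overline{\beta},T|_{C_i})\le\omega_{crit}(\overline{\beta})$ by definition. Summing over $i$ gives $\omega(\overline{\beta},T)\le\binom{n}{\lfloor n/2\rfloor}\omega_{crit}(\overline{\beta})$.

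For the matching lower bound and the explicit template, let $(A_1^*,\ldots,A_{L^*}^*)$ be an optimal valid assignment on a chain $c_1\prec\cdots\prec c_{L^*}$ realizing $\omega_{crit}(\overline{\beta})$. For $n$ large, set $k_j=\lfloor n/2\rfloor-\lfloor L^*/2\rfloor+j$ and define $T_\star(x)=A_j^*$ when $|x|=k_j$ for some $j\in\{1,\ldots,L^*\}$, and $T_\star(x)=\emptyset$ otherwise. Then $\on{Supp}(T_\star)$ is a block of $L^*$ consecutive layers containing the middle layer, $T_\star$ depends only on $|x|$, and validity follows since any chain in $\on{Supp}(T_\star)$ has at most $L^*$ elements (one per layer $k_j$), so any colouring drawn from $T_\star$ pulls back to a valid colouring of a subchain of $(c_1\prec\cdots\prec c_{L^*})$ with colours in $(A_j^*)$. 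A direct computation gives
$$\omega(\overline{\beta},T_\star)=\sum_{j=1}^{L^*}\binom{n}{k_j}\log(1+|A_j^*|_{\overline{\beta}})=(1+O(1/n))\binom{n}{\lfloor n/2\rfloor}\omega_{crit}(\overline{\beta}),$$
using $\binom{n}{k_j}/\binom{n}{\lfloor n/2\rfloor}=1+O(1/n)$ for $k_j$ in a bounded window around $\lfloor n/2\rfloor$.

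To upgrade from an asymptotic match to the statement that the max is \emph{attained} at some layered template of the prescribed form, I would use a compression/symmetrization argument. Starting from any maximizer $T_0$ (which exists since the space of valid templates is finite), the $S_n$-action on $\mathcal{P}([n])$ preserves $\omega$ and validity, and one does a layer-by-layer replacement: in each layer $k$, pick a constant set $A_k\subseteq\{1,\ldots,m\}$ with $\binom{n}{k}\log(1+|A_k|_{\overline{\beta}})\ge\sum_{|x|=k}\log(1+|T_0(x)|_{\overline{\beta}})$ while preserving global chain avoidance, and then shift the support to a central block of layers. The main obstacle is this last step: since chain constraints couple distinct layers, the choice of $A_k$ at each layer must be compatible with the constants chosen elsewhere. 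I would handle this by exploiting the sparsity hypothesis on $\mathcal{G}$ (that each colour has a forbidden monochromatic chain), which tightly controls how colours can propagate across layers and makes the layer-wise replacement consistent.
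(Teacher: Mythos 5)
Your symmetric-chain-decomposition upper bound and your explicit layered construction $T_\star$ are both correct, and together they do establish the ``in particular'' estimate $\max_T \omega(\overline{\beta},T)=(1+O(1/n))\,\omega_{crit}(\overline{\beta})\binom{n}{n/2}$. The SCD argument is a nice independent route to the upper bound: the restriction of any valid $T$ to a symmetric chain is a valid chain template, hence has weight at most $\omega_{crit}(\overline{\beta})$, and there are $\binom{n}{\lfloor n/2\rfloor}$ such chains. This part is sound.

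However, the structural claim of the proposition --- that the maximum is \emph{attained} by a template supported on a consecutive block of layers whose values depend only on the layer --- is not established by your argument, and you acknowledge this yourself. Your proposed layer-by-layer compression requires you to replace $T_0$ on each layer $k$ by a single set $A_k$ while preserving validity, but, as you note, the validity constraints (forbidden coloured chains) couple different layers, so you cannot choose the $A_k$'s independently by maximizing each layer's contribution. The phrase ``I would handle this by exploiting the sparsity hypothesis'' is not an argument: sparsity bounds the number of colourable layers in a chain, but it does not by itself tell you how to coherently select a single colour set per layer when the starting maximizer $T_0$ may vary wildly within a layer.

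The paper does something simpler that resolves exactly this difficulty: rather than compressing layer-by-layer, it averages over maximal chains. For a uniformly random maximal chain $C$, set $Z(C)=\sum_{x\in C}\binom{n}{|x|}\log(1+|T'(x)|_{\overline{\beta}})$; then $\mathbb{E}\,Z(C)=\omega(\overline{\beta},T')$, so some chain $C$ achieves $Z(C)\ge\omega(\overline{\beta},T')$. Defining the layered template $T$ by $T(x)=T'(y)$ for the unique $y\in C$ with $|y|=|x|$ gives $\omega(\overline{\beta},T)=Z(C)\ge\omega(\overline{\beta},T')$, and $T$ is automatically valid because every coloured chain in $T$ projects to a coloured chain of $T'|_C$, which is a subchain of a valid template, and $\mathcal{G}$ consists only of chains. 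In other words, the ``compression'' that actually works picks the colour sets $A_k$ for all layers simultaneously from one good maximal chain, which guarantees coherence for free. This is the key move missing from your sketch; once you have it, the estimate follows exactly as in your lower bound argument.
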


\begin{proof}
Our strategy will be to consider a valid template $T'$ which maximizes $\omega(\overline{\beta},T')$, and construct from it another valid template $T$ that satisfies the conclusions of \Cref{largeprop}. Choose a uniformly random maximal chain $C$ and consider the random variable
$$Z(C)=\sum_{x \in C} \binom{n}{|F|}\log(1+|T'(x)|_{\overline{\beta}}).$$
By linearity of expectation, it is easy to see that
$$\mathbb{E}Z(C)=\omega(\overline{\beta},T').$$ Therefore, there exists a chain $C$ such that $Z(C) \ge \omega(\overline{\beta},T')$. But then the template $T$ defined by $T(x)=T'(y)$ with $y \in C$ the unique element such that $|x|=|y|$ satisfies $$\omega(\overline{\beta},T)=Z(C)\ge \omega(\overline{\beta},T').$$ By construction $T$ is valid since $\mathcal{G}$ consists exclusively of chains, and by maximality of $T'$, $\omega(\overline{\beta},T)=\omega(\overline{\beta},T')$ is maximal. Clearly the maximality of such a $T$ further implies that $\on{Supp}(T)$ is a consecutive block of layers containing the middle layer.
\end{proof}

We are now ready to prove the lower bound in \Cref{bigthm}.

\begin{proof}[Proof of the lower bound in \Cref{bigthm}]
Let $T$ be the extremal template from \Cref{largeprop} and let $\Lambda_T$ be the collection of colored subsets contained in $T$. Then by \cref{measprop} and \cref{largeprop} we have that

$$\mu(\overline{\beta},\Lambda(\mathcal{G},n)) \ge \mu( \overline{\beta},\Lambda_T) = e^{\omega(\overline{\beta},T)}= e^{(1+O(\frac{1}{n})) \omega_{crit}(\overline{\beta})\binom{n}{n/2}}, $$

which gives the desired lower bound.
\end{proof}

\subsection{Proof of the upper bound of \Cref{bigthm} assuming a balanced supersaturation result}

Now we describe the outline of the proof of the upper bound in \Cref{bigthm}. The main thrust of the proof is identical to that of \cite{MorrisChain}. The key new ideas are to create a balanced supersaturation result that works for templates and to implement the container lemma in a way which handles simultaneously the various uniformities of $\mathcal{G}$.

Our goal will be to find a collection $\mathcal{C}$ of $e^{o(1)\binom{n}{n/2}}$ templates with each template $T \in \mathcal{C}$ having $w(\overline{\beta},T) \le (\omega_{crit}(\overline{\beta})+o(1))\binom{n}{n/2}$ such that every validly colored subset of $\mathcal{P}([n])$ is contained in some template $T \in \mathcal{C}$. Then by a union bound we can conclude \cref{bigthm}.

To accomplish this, we will use the following hypergraph container lemma. 

Given a hypergraph $\mathcal{H}$, we denote $v(\mathcal{H})$ for the vertices of $\mathcal{H}$, $e(\mathcal{H})$ for the edges of $\mathcal{H}$, and we recall for $A \subset v(\mathcal{H})$ the standard notations $d_{\mathcal{H}}(A)$ for the number of hyperedges of $\mathcal{H}$ which contain $A$, and $j$th codegree $\Delta_j(\mathcal{H})=\max_{|A|=j}d_{\mathcal{H}}(A)$.

\begin{lem} \cite{Balogh,Saxton}
\label{hyplem}
For every $K \in \mathbb{N}$ and $c>0$ there exists $\epsilon>0$ such that the following holds. Let $\tau \in (0,1)$ and suppose that $\mathcal{H}$ is a $K$-uniform hypergraph on $N$ vertices such that for $1 \le j \le K$ we have
\begin{align*}
    \Delta_j(\mathcal{H})\le& c\cdot \tau^{j-1}\frac{e(\mathcal{H})}{N}.
\end{align*}
Then there exists a family $\mathcal{C}$ of subsets of $v(\mathcal{H})$, and a function $f:\mathcal{P}(v(\mathcal{H}))\to \mathcal{C}$ such that
\begin{enumerate}
    \item For every $I \in \mathcal{I}(\mathcal{H})$, there exists $F \subset I$ with $|F| \le K\tau N$ and $I \subset F \cup f(F)$
    \item $|C| \le (1-\epsilon)N$ for every $C \in \mathcal{C}$.
\end{enumerate}
\end{lem}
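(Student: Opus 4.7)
The plan is to prove this via the container algorithm of Saxton--Thomason (or equivalently the scythe algorithm of Balogh--Morris--Samotij). The goal is to construct, for each independent set $I$ of $\mathcal{H}$, a fingerprint $F = F(I) \subset I$ with $|F| \le K\tau N$ together with a container $f(F) \supset I \setminus F$ with $|f(F)| \le (1-\epsilon)N$, such that $f$ depends only on $F$ and not on $I$ itself. The central idea is that the codegree hypothesis forces the edges of $\mathcal{H}$ to be ``spread out,'' so any independent set must avoid a constant fraction of vertices, and this avoidance can be certified by a small amount of information about $I$.

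First, I would fix a total order on $v(\mathcal{H})$ obtained from a weighting scheme based on degrees in $\mathcal{H}$ (with a random perturbation to break ties if needed). For a given independent set $I$, I would run an iterative peeling procedure that maintains a shrinking ``surviving'' vertex set $S_t$ and a growing fingerprint $F_t \subset I$. At each step I would look at the induced sub-hypergraph $\mathcal{H}[S_t]$, compute a scoring function $\psi_t : S_t \to \mathbb{R}_{\ge 0}$ measuring weighted degrees, and examine the vertex $v_t$ of maximum score. If $\psi_t(v_t)$ exceeds a threshold tied to $\tau$, then: (a) if $v_t \in I$, record $v_t$ in $F_{t+1}$ and set $S_{t+1} = S_t \setminus \{v_t\}$; (b) if $v_t \notin I$, remove $v_t$ from $S_t$ to form $S_{t+1}$. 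In both cases one additionally discards any vertices that become ``forced'' out of the independent set by the presence of the chosen $v_t$ in $I$ (for instance, endpoints of edges now determined to lie in $I$). The algorithm halts once no vertex has a score above the threshold, and we set $f(F) = S_{\text{end}}$.

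The key point for well-definedness of $f$ is that the algorithm can be replayed from $F$ alone: at each step, knowing the fingerprint so far determines which vertices belong to $I$ among those already examined, and this together with $\mathcal{H}$ determines all forced exclusions. The bound $|F| \le K\tau N$ then follows because each addition to the fingerprint occurs at a vertex of large weighted degree, and the total weight is bounded by $Ke(\mathcal{H})$, so at most $\tau N$ such additions per ``round'' are possible across the $K$ uniformity levels. The bound $|f(F)| \le (1-\epsilon)N$ follows from a potential-function argument: the average of $\psi_t$ decays at a controlled rate, so the algorithm cannot terminate unless $S_t$ has already shrunk by a constant factor.

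The main obstacle is the technical heart of the proof: showing that the codegree hypothesis $\Delta_j(\mathcal{H}) \le c\tau^{j-1}\frac{e(\mathcal{H})}{N}$ propagates through the algorithm and yields sufficient progress at each step. Concretely, one needs a ``supersaturation within a container'' statement which says that in any large surviving sub-hypergraph, the proportion of vertices with high weighted degree is at least some absolute constant. The standard derivation uses an inclusion--exclusion / switching expansion of $\sum_v \psi_t(v)^K$ in terms of the codegrees $\Delta_j$, and the hypothesis bounds this sum so that few vertices can dominate it --- hence removing the top-score vertex always shrinks the potential by the required amount. Balancing the parameters $\epsilon$ and $c$ to make both the fingerprint bound and the container bound work simultaneously is the final delicate step.
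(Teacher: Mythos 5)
The paper does not prove this lemma: it is imported verbatim from \cite{Balogh,Saxton} (the hypergraph container theorem, stated in the form used by \cite{MorrisChain}), so there is no in-paper argument to compare your sketch against. Taken on its own, your proposal captures the correct high-level design --- build a fingerprint $F\subset I$ and a container $f(F)$ by iteratively examining high-degree vertices, replaying the construction from $F$ alone, and using the codegree hypothesis to guarantee constant-factor shrinkage --- but two of its central mechanisms are wrong or absent once $K\ge 3$.

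First, the ``forced exclusion'' step you describe is a $K=2$ phenomenon. For a graph, learning $v_t\in I$ immediately certifies that every neighbour of $v_t$ lies outside $I$, which is what makes a single-pass peeling of $\mathcal{H}[S_t]$ work. For $K\ge 3$, learning $v_t\in I$ forces nothing out of $I$: an edge is violated only when \emph{all} $K$ of its vertices lie in $I$. The scythe/Saxton--Thomason algorithm resolves this by maintaining a tower of auxiliary link hypergraphs $\mathcal{B}_K\supset\mathcal{B}_{K-1}\supset\cdots\supset\mathcal{B}_1$ of decreasing uniformity: when a high-degree vertex $v$ of the $\ell$-uniform auxiliary hypergraph is found to lie in $I$, the edges through $v$ are contracted to $(\ell-1)$-sets and deposited into $\mathcal{B}_{\ell-1}$, and actual removals from the candidate container occur only at uniformity $1$. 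Your accounting for $|F|\le K\tau N$ tacitly invokes ``$K$ uniformity levels,'' but your algorithm never creates them, so that step of the argument attaches to nothing in the construction you describe. Second, you explicitly flag as ``the main obstacle'' the inheritance of the codegree bounds $\Delta_j\le c\,\tau^{j-1}e(\mathcal{H})/N$ down the levels of this tower (with corresponding $\tau$-rescaling) and leave it unresolved; that inheritance is precisely the content that justifies the $\tau^{j-1}$ scaling in the hypothesis and is where the bulk of the proof lives. As written, the proposal is a reasonable orientation toward the known argument but would not compile into a proof without supplying both of these.
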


In order to use the hypergraph container lemma we translate between validly colored subsets of $\mathcal{P}([n])$ and independent subsets of a certain hypergraph.  We consider the following ambient non-uniform hypergraph $\mathcal{A}$ defined by
$$v(\mathcal{A})=\mathcal{P}([n])\times \{1,\ldots,m\},$$
\begin{align*}
    e(\mathcal{A})=\bigcup_{\ell=1}^k\{\left((x_1,i_1),\ldots, (x_\ell,i_\ell)\right) \mid x_1 \subsetneq \ldots \subsetneq x_{\ell}\text{ in $\mathcal{P}([n])$ and}\\
(i_1 \prec \ldots \prec i_\ell) \in \mathcal{G}_{\ell}\}.
\end{align*}
%\begin{align*}
%v(\mathcal{A})=&\mathcal{P}([n])\times \{1,\ldots,m\},\\ e(\mathcal{A})=&\bigcup_{\ell=1}^k\{\{(x_1,i_1),\ldots, (x_\ell,i_\ell)\} \mid &&x_1 \subsetneq \ldots \subsetneq x_{\ell}\text{ in $\mathcal{P}([n])$ and}\\&
%&&(i_1 \prec \ldots \prec i_\ell) \in \mathcal{G}_{\ell}\}.
%\end{align*}
By construction, a validly colored subset of $\mathcal{P}([n])$ can be viewed as an independent set in $\mathcal{A}$, though we remark that this is not a 1-1 correspondence. Also, there is a natural $1$-$1$ correspondence between templates $T$ and subsets of $v(\mathcal{A})$, where we assign to a template $T$ the subset of all $(x,c)$ with $c \in T(x)$.
By a slight abuse of notation we shall sometimes view $T$ as a subset of vertices in $v(\mathcal{A})$ and sometimes view $T$ as the induced sub-hypergraph
$$T=\mathcal{A}|_T\subset \mathcal{A}.$$
The notion of order of the sub-hypergraph associated to $T$ is related to the notion of weight of $T$ by
$$|T|=|v(\mathcal{A}|_T)|=\Theta( \omega(\overline{\beta},T)),$$
i.e. it is within a constant factor of $\omega(\overline{\beta},T)$.

 Our desired set of hypergraph containers will correspond to a family of templates which efficiently contains validly colored subsets of $\mathcal{P}([n])$. In what follows we shall usually notate $\mathcal{H}$ for a sub-hypergraph of $T$.

\begin{lem}
\label{balancelem}
Suppose that $\mathcal{G}_k$ contains all colored chains of length $k$. For every $\alpha>0$ there exists a $\delta>0$ such that the following holds. Let $n \in \mathbb{N}$ and suppose that $T$ is a template of $\mathcal{P}([n])$ supported on sets of size between $\frac{n}{3}$ and $\frac{2n}{3}$ such that $\omega(\overline{\beta},T) \ge (\omega_{crit}(\overline{\beta})+\alpha)\binom{n}{n/2}$. Then there exists $2\le l \le k$ and there exists an $l$-uniform sub-hypergraph $\mathcal{H}_\ell$ of $T$ such that
\begin{align*}e(\mathcal{H}_\ell) &\ge \delta^ln^{l-1}\binom{n}{n/2}\text{, and}\\
\Delta_j(\mathcal{H}_\ell) &\le (\delta n)^{l-j}
\end{align*}
for $1 \le j \le l$.
\end{lem}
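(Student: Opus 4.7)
The plan is to adapt the balanced supersaturation strategy of \cite{MorrisChain} to the templated multicoloured setting. The central observation is that, by the definition of $\omega_{crit}(\overline{\beta})$ and the fact that $\mathcal{G}$ consists entirely of chain posets, whenever the restricted template $T|_C$ has weight strictly exceeding $\omega_{crit}(\overline{\beta})$ on some maximal chain $C \subset \mathcal{P}([n])$, the restriction $T|_C$ cannot be valid and so must contain a forbidden coloured subchain, i.e.\ a hyperedge of $\mathcal{A}|_T$ of some length $\ell \in \{2,\dots,k\}$. The hypothesis that $\mathcal{G}_k$ consists of \emph{all} coloured $k$-chains plays a key quantitative role: every coloured $k$-chain embedded in $T$ is automatically a hyperedge, so that supercritical chain-weight will supply many hyperedges, not just one.

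I would begin by rerunning the random maximal chain computation from the proof of \Cref{largeprop}: set $Z(C) = \sum_{x\in C}\binom{n}{|x|}\log(1+|T(x)|_{\overline{\beta}})$, so that $\mathbb{E}[Z(C)] = \omega(\overline{\beta},T) \geq (\omega_{crit}(\overline{\beta})+\alpha)\binom{n}{n/2}$. Since $T$ is supported in layers with $|x| \in [n/3,2n/3]$, every binomial $\binom{n}{|x|}$ is bounded above by $\binom{n}{n/2}$, giving $\mathbb{E}[\omega(\overline{\beta},T|_C)] \geq \omega_{crit}(\overline{\beta})+\alpha$. Combined with the deterministic upper bound $\omega(\overline{\beta},T|_C) = O(n)$, a simple averaging argument forces at least an $\Omega_\alpha(1/n)$ fraction of maximal chains to satisfy $\omega(\overline{\beta},T|_C) > \omega_{crit}(\overline{\beta})+\alpha/2$; pigeonholing on $\ell \in \{2,\dots,k\}$ fixes a single uniformity for which a proportional fraction of chains contain some forbidden $\ell$-configuration. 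The assumption that $\mathcal{G}_k$ is complete then allows one to count \emph{all} hyperedges inside each such chain rather than extracting just one, yielding via double counting against the number of maximal chains through each hyperedge the edge lower bound $|e(\mathcal{H}_\ell)| \geq \delta^\ell n^{\ell-1}\binom{n}{n/2}$ for an appropriate $\delta = \delta(\alpha,k)$.

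The main obstacle is achieving the balanced codegree bound $\Delta_j(\mathcal{H}_\ell) \leq (\delta n)^{\ell-j}$ while preserving the edge count. A fixed $j$-subset of vertices in $\mathcal{A}|_T$ specifies positions and colours of $j$ of the $\ell$ coordinates of an $\ell$-chain, and the remaining $\ell-j$ positions can be filled in at most $O(n^{\ell-j})$ ways with bounded colour overhead, matching the target form up to a constant; however, this bound can fail pointwise at pathological $j$-subsets where many extensions coexist. Following \cite{MorrisChain}, I would repair this either by iteratively deleting hyperedges through overloaded $j$-subsets or by probabilistically sparsifying $\mathcal{H}_\ell$, and the delicate estimate is to show that such a clean-up removes only a small constant fraction of the edges when $\delta$ is chosen small relative to $\alpha$, $k$, and $m$. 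Because the uniformity $\ell$ varies across instances and $T$ is not required to be layer-invariant, this pruning must be carefully adapted to uniformly handle all possible uniformities at once, which is where the argument most closely parallels, and must most carefully diverge from, the single-colour $k$-chain case treated in \cite{MorrisChain}.
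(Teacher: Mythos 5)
Your opening averaging argument is on the right track and matches the spirit of what is used here: conditioning on a uniformly random maximal chain shows that a supercritical template has, in expectation, supercritical weight along the chain, and hence at least one forbidden coloured subchain. You are also correct that the completeness of $\mathcal{G}_k$ plays a quantitative role, though in the paper this is not used to ``count all hyperedges inside each such chain'' for the edge lower bound --- rather, it is used in an auxiliary bound (\Cref{boundlem}) controlling, for every colour tuple $(c_1,\dots,c_i)$, the number $Z^x_{c_1\succ\dots\succ c_i}(C)$ of coloured $i$-subchains below $x$ of that colour pattern in terms of $Q + Y^x(C)$; this is what keeps the various uniformities comparable throughout the argument.

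The genuine gap is in the codegree step. You acknowledge that the naive codegree ``can fail pointwise at pathological $j$-subsets'' and propose to fix this ``either by iteratively deleting hyperedges through overloaded $j$-subsets or by probabilistically sparsifying $\mathcal{H}_\ell$,'' and then admit the delicate estimate is to show the clean-up removes only a small fraction of edges. That delicate estimate is precisely the content of the lemma, and it is not supplied. The paper does not use a ``construct-then-prune'' scheme. Instead, $\mathcal{H}$ is built \emph{bottom-up}, adding one hyperedge at a time while maintaining all codegree invariants $\Delta_j(\mathcal{H}_\ell)\le(\delta n)^{\ell-j}$ after every addition, and the proof shows that this greedy process cannot stall before $e(\mathcal{H}_\ell)\ge\delta^{\ell-1}n^{\ell}\binom{n}{n/2}$ for some $\ell$. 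The engine for this is: (i) removing the few coloured vertices with saturated singleton codegree costs only $O(\delta)\binom{n}{n/2}$ in weight, leaving a still-supercritical template $T'$; (ii) by \Cref{averagelem}, $T'$ has a \emph{minimal} vertex $x$ with $\mathbb{E}Y^x\ge C_1\alpha/2$, so by minimality and \Cref{boundlem} every strictly lower vertex has $\mathbb{E}Z^{x'}_{\cdot}\le Q + C_1\alpha/2$; and (iii) \Cref{boundclm} shows each step of extending a partial chain downward has at most $O(1)\delta n$ ``bad'' continuations that would trigger a codegree violation. Combining these, the expected number of ``bad'' coloured chains below $x$ is a $O(\delta)$-fraction of $\mathbb{E}Y^x$, so $Y^x_{good}$ is not identically zero and a new edge can be added. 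Without an analogue of (ii)--(iii), your prune-based approach has no control over how many edges pass through a saturated $j$-subset, and deleting them could in principle wipe out a constant fraction of the hypergraph. The minimality of $x$ in step (ii), which you do not mention, is exactly what makes the counting below $x$ self-limiting; this is the mechanism that your sketch is missing.
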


\begin{cor}
\label{bigcor}
For every $\alpha \in (0,1)$ there exists an $\epsilon>0$ such that the following holds. Let $n \in \mathbb{N}$ and suppose that $T$ is a template of $\mathcal{P}([n])$ supported on sets of size between $\frac{n}{3}$ and $\frac{2n}{3}$ such that $\omega(\overline{\beta},T) \ge (\omega_{crit}(\overline{\beta})+\alpha)\binom{n}{n/2}$. Then there exists a family $\mathcal{C}$ of sub-templates of $T$ such that
\begin{enumerate}
    \item For every validly colored subset $I$ contained in $T$, there is a $T' \in \mathcal{C}$ such that $I$ is contained in $T'$.
    \item We have $|\mathcal{C}|\le e^{O(1)\frac{\log n}{n}|T|}$ for some constant $O(1)$ independent of $\alpha$.
    \item We have $|T'|\le (1-\epsilon)|T|$ for every $T' \in \mathcal{C}$.
\end{enumerate}
\end{cor}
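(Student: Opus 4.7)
My plan is to feed the balanced supersaturation \Cref{balancelem} into the hypergraph container lemma \Cref{hyplem}, and then translate the output back to sub-templates of $T$. First I would apply \Cref{balancelem} (possibly after enlarging $\mathcal{G}$ by all colored chains of length $L(\mathcal{G})$, which does not change $\Lambda(\mathcal{G},n)$ since no valid coloring contains a chain of length $L(\mathcal{G})$). This produces $\delta=\delta(\alpha)>0$, an integer $2\le \ell\le k$, and an $\ell$-uniform sub-hypergraph $\mathcal{H}_\ell$ of $T$ satisfying $e(\mathcal{H}_\ell)\ge \delta^\ell n^{\ell-1}\binom{n}{n/2}$ and $\Delta_j(\mathcal{H}_\ell)\le(\delta n)^{\ell-j}$ for $1\le j\le \ell$. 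Set $N:=|v(\mathcal{H}_\ell)|$; the handshake identity together with $\Delta_1\le(\delta n)^{\ell-1}$ gives $N\ge \ell\delta\binom{n}{n/2}$. I will need the stronger matching-of-orders bound $N\ge c_0|T|$ for a constant $c_0=c_0(\alpha,\overline{\beta},m)>0$, which is the main delicate point (see below).

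Next I choose $\tau=1/n$. The codegree hypothesis of \Cref{hyplem}, $(\delta n)^{\ell-j}\le c\tau^{j-1}\cdot e(\mathcal{H}_\ell)/N$, then reduces after substitution to $c\ge C(\delta,c_0)$ for a constant independent of $n$ and $j$. Fixing such $c$ and invoking \Cref{hyplem} with $K=\ell$ produces $\epsilon=\epsilon(\alpha)>0$, a family $\mathcal{C}_0$ of subsets of $v(\mathcal{H}_\ell)$, and a function $f$ such that every independent set $I$ of $\mathcal{H}_\ell$ admits $F\subseteq I$ with $|F|\le \ell\tau N$ and $I\subseteq F\cup f(F)$, while $|C|\le(1-\epsilon)N$ for every $C\in\mathcal{C}_0$.

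I then take $\mathcal{C}$ to consist of the sub-templates of $T$ corresponding to $(v(T)\setminus v(\mathcal{H}_\ell))\cup F\cup f(F)$ over admissible $F$. Any validly coloured $I$ contained in $T$ is independent in $\mathcal{A}$ and hence in $\mathcal{H}_\ell$, so lies in a member of $\mathcal{C}$, giving (1). The enumeration $\binom{N}{\le \ell N/n}\le e^{O(\log n/n)\,N}\le e^{O(\log n/n)\,|T|}$ gives (2). Finally, each $T'\in\mathcal{C}$ satisfies
\[
|T'|\le|T|-N+\ell\tau N+(1-\epsilon)N=|T|-(\epsilon-\ell/n)N\le(1-c_0(\epsilon-\ell/n))|T|,
\]
which is at most $(1-\epsilon')|T|$ for $n$ large with $\epsilon':=c_0\epsilon/2$, giving (3).

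The main obstacle I expect is verifying the matching-of-orders bound $N\ge c_0|T|$, needed to convert the container lemma's $(1-\epsilon)N$ saving into the $(1-\epsilon')|T|$ reduction claimed in (3). The edge lower bound in \Cref{balancelem} alone only yields $N\ge \ell\delta\binom{n}{n/2}$, which suffices when $|T|=O(\binom{n}{n/2})$ but a priori $|T|$ could be as large as $\Theta(2^n)$; reconciling this requires either a finer inspection of \Cref{balancelem} to see that the edges of $\mathcal{H}_\ell$ in fact use a constant fraction of $T$'s vertices, or an initial preprocessing step reducing $|T|$. Modulo this, the rest of the argument is a routine parameter-matching application of \Cref{hyplem} with $\tau=1/n$ and $c$ chosen large enough to absorb the $\delta$-dependent factors.
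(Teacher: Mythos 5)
You have correctly identified the skeleton (balanced supersaturation into \Cref{hyplem} with $\tau=1/n$), and you have also correctly identified the exact missing piece: a single application of \Cref{balancelem} gives only $N\ge \ell\delta\binom{n}{n/2}$, which is far from the required $N\ge c_0|T|$ when $|T|$ is of order $2^n$. What you do not have is a way to close that gap, and neither of your proposed escape routes works as stated. A ``finer inspection'' of \Cref{balancelem} will not do it: the lemma's proof constructs the hypergraph greedily and stops once \emph{some} uniformity reaches $\delta^{\ell-1}n^{\ell}\binom{n}{n/2}$ edges, so there is no reason its vertex set touches a constant fraction of $T$. And ``preprocessing to shrink $T$'' is circular here, since the whole point of \Cref{bigcor} is to shrink $T$.

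The paper's resolution is a partition-and-merge argument, which you should internalize since it is the one genuinely new move. Partition $T$ into disjoint pieces $T_0\cup T_1\cup\cdots\cup T_r$ with $\omega(\overline{\beta},T_0)$ below the threshold $(\omega_{crit}(\overline{\beta})+\alpha)\binom{n}{n/2}$ and each $\omega(\overline{\beta},T_i)$, $1\le i\le r$, equal to that threshold up to $O(1)$, so that $r=\Theta(|T|/\binom{n}{n/2})$. Apply \Cref{balancelem} separately to each $T_i$ to get an $\ell_i$-uniform $\mathcal{H}^i\subset T_i$ with $e(\mathcal{H}^i)\ge\delta^{\ell_i}n^{\ell_i-1}\binom{n}{n/2}$ and $\Delta_j(\mathcal{H}^i)\le(\delta n)^{\ell_i-j}$. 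Pick the most frequent uniformity $\ell$ (so at least $r/k$ indices $i$ have $\ell_i=\ell$), and set $v(\mathcal{H}):=v(T)$ with $e(\mathcal{H}):=\bigsqcup_{\ell_i=\ell}e(\mathcal{H}^i)$. Because the $T_i$ are disjoint, every hyperedge of $\mathcal{H}$ lies inside a single $T_i$, and hence $\Delta_j(\mathcal{H})\le\max_i\Delta_j(\mathcal{H}^i)\le(\delta n)^{\ell-j}$ with no accumulation. At the same time $e(\mathcal{H})\ge\frac{r}{k}\delta^{\ell}n^{\ell-1}\binom{n}{n/2}$ and $|v(\mathcal{H})|=|T|=O(1)\,r\binom{n}{n/2}$, so $e(\mathcal{H})/|v(\mathcal{H})|=\Omega(\delta^{\ell}n^{\ell-1})$ is independent of $r$. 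With $\tau=1/n$ and $c=O(1)k\delta^{-k}$ the codegree hypothesis of \Cref{hyplem} is then satisfied, and -- crucially -- since $v(\mathcal{H})=v(T)$ the container output $|C|\le(1-\epsilon)N$ is already $|T'|\le(1-\epsilon+o(1))|T|$, with no separate matching-of-orders step. This is precisely where your version stalls and the paper's does not.

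One smaller remark: when you augment $\mathcal{G}$, use chains of length $km$ (as the paper does) or $L(\mathcal{G})$ -- either works, since no validly colored subset contains a chain that long -- but you must then relabel $k$ as that new maximum length so that the hypothesis ``$\mathcal{G}_k$ contains all colored chains of length $k$'' in \Cref{balancelem} is met.
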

\begin{proof}[Proof of \Cref{bigcor} assuming \Cref{balancelem}]
It suffices to prove this when $\mathcal{G}_k$ contains all colored chains of length $k$ (as we can always augment $\mathcal{G}$ with all colored chains of length $km$ without changing the valid configurations). Partition $T\subset v(\mathcal{A})$ into sets $T_0\cup T_1 \cup \ldots \cup T_r$ for some $r\ge 1$ such that \begin{align*}\omega(\overline{\beta},T_0)&<(\omega_{crit}(\overline{\beta})+\alpha)\binom{n}{n/2},\text{ and}\\ \omega(\overline{\beta},T_i)&=(\omega_{crit}(\overline{\beta})+\alpha)\binom{n}{n/2}+O(1).\end{align*} By \cref{balancelem}, there exists $\delta=\delta(\alpha)$ such that for each of $1\le i \le r$ there exists an $\ell_i$-uniform sub-hypergraph $\mathcal{H}^i=\mathcal{H}^i_{\ell_i}$ of the templates $T_i$ for some $2 \le \ell_i \le k$ with the property that
\begin{align*}
    e(\mathcal{H}^i)&\ge \delta^{\ell_i}n^{\ell_i-1}\binom{n}{n/2},\text{ and}\\
    \Delta_j(\mathcal{H}^i) &\le (\delta n)^{\ell_i-j}\text{, for all $1 \le j \le \ell_i$.}
\end{align*}
Let $2 \le l \le k$ be the most frequent uniformity. Construct the $\ell$-uniform sub-hypergraph $\mathcal{H}$ of $T$ with $v(\mathcal{H})=v(T)$ and $$e(\mathcal{H}) : = \bigsqcup_{l_{i}=l} e(\mathcal{H}^{i}).$$

By construction we have \begin{align*}
    |e(\mathcal{H})| &\ge \frac{r}{k} \delta^{l}n^{l-1}\binom{n}{n/2}\\
    |v(\mathcal{H})| &= |v(T)|= {{\sum_{i=0}^r|v(T_i)|=O(1)\sum_{i=0}^r\omega(\overline{\beta},T_i)}}=O(1)r\binom{n}{n/2}\\
    \Delta_j(\mathcal{H}) &\le (\delta n)^{\ell-j}\text{ for all $1 \le j \le \ell$.}
    %|e(\mathcal{H})| &\ge \frac{r}{k} \delta^{l}n^{l-1}\binom{n}{n/2}\\
%v(\mathcal{H}) &= O(1)\omega(\overline{\beta}, \mathcal{H}) = O(1)(r+1)((\omega_{crit}(\overline{\beta})+\alpha)\binom{n}{n/2}+O(1))\\
%&=O(1)r(\omega_{crit}(\overline{\beta})+\alpha)\binom{n}{n/2} \le O(1)r(\omega_{crit}(\overline{\beta})+1)\binom{n}{n/2},\text{ and} \\ \Delta_j(\mathcal{H}) &\le (\delta n)^{\ell-j}\text{ for all $1 \le j \le \ell$.}
\end{align*}

Set $\tau= \frac{1}{n}$ and $c= O(1)k\delta^{-k}$, and apply \cref{hyplem} to the $l$-uniform hypergraph $\mathcal{H}$ to obtain the following. There exists $\epsilon$ depending only on $c,k$ and there exists a collection $\mathcal{C}$ of subtemplates $T'$ of $T$, and a function $f:\mathcal{P}(v(T))\to \mathcal{C}$ such that
\begin{enumerate}
    \item For every $I \in \mathcal{I}(T)$, there exists $F \subset I$ with $|F| \le \frac{k}{n}|T|$ and $I \subset F \cup f(F)$
    \item $|T'| \le (1-\epsilon)|T|$ for every $T' \in \mathcal{C}$.
\end{enumerate}

Set $\mathcal{C}':=\{F \cup f(F) \text{ : } F \in \mathcal{C}\}$ and note that

\begin{enumerate}
    \item For every $I \in \mathcal{I}(T)$, there exists $T' \in \mathcal{C}'$ such that $I \subset T'$
    \item $|\mathcal{C}'|\le \frac{k}{n}|T|\binom{|T|}{\frac{k}{n}|T|}=e^{O(1)\frac{\log(n)}{n}|T|}$.
     \item $|T'| \le (1-\epsilon+o(1))|T|$ for every $T' \in \mathcal{C}'$.
\end{enumerate}
\end{proof}
\begin{proof}[Proof of upper bound of \Cref{bigthm} assuming \Cref{bigcor}]
Let $\mathcal{P}([n])'$ be all vertices in $x\in \mathcal{P}([n])$ with $|x| \in (\frac{n}{3},\frac{2n}{3})$, and let $\Lambda'(\mathcal{G},n)$ be the validly colored subsets of $\mathcal{P}([n])'$. We have $$\mu(\overline{\beta},\Lambda(\mathcal{G},n))\le \left(\prod_{x \not \in \mathcal{P}([n])'}(1+\sum_{i=1}^m \beta_i)\right)\mu(\overline{\beta},\Lambda'(\mathcal{G},n))=e^{o(1)\binom{n}{n/2}}\mu(\overline{\beta},\Lambda'(\mathcal{G},n)).$$
Therefore it is enough to prove that for every $\alpha>0$ we have
$$\mu(\overline{\beta},\Lambda'(\mathcal{G},n))\le e^{(\omega_{crit}(\overline{\beta})+\alpha+o(1))\binom{n}{n/2}}.$$

Fix a threshold value $1>\alpha>0$. Starting with $\mathcal{A}|_{\mathcal{P}([n])'\times \{1,\ldots,m\}}$, we iteratively apply \Cref{bigcor} until we obtain a family $\mathcal{C}$ of subtemplates $T$ with $\omega(\overline{\beta},T) \le (\omega_{crit}(\overline{\beta})+\alpha)\binom{n}{n/2}$. This is encoded by a branching process where a template $T$ with $\omega(T)\ge (\omega_{crit}(\overline{\beta})+\alpha)\binom{n}{n/2}$ splits into subtemplates $T_1,T_2,\ldots,T_s$ such that each validly colored subset contained in $T$ is contained in some $T_i$. By \Cref{bigcor}, there exists an $\epsilon=\epsilon(\alpha)>0$ such that we have $s \le e^{O(1)\frac{\log(n)}{n}|T|}$, and $|T_i|\le (1-\epsilon)|T|$.

Because our initial set has size at most $m2^n$, and the size of the templates decreases by a factor of $(1-\epsilon)$ each iteration, each template at level $i$ in this branching process splits into at most $e^{O(1)\frac{\log(n)}{n}(1-\delta)^im2^n}$ other templates. Therefore, the final collection $\mathcal{C}$ of templates has cardinality bounded above by
$$|\mathcal{C}|\le \prod_{i=0}^{\infty}e^{O(1)\frac{\log(n)}{n}(1-\epsilon)^im2^n}=e^{O(1)\frac{log(n)}{n}\epsilon^{-1}m2^n}=e^{o(1)\binom{n}{n/2}}.$$
Note that each set in $\Lambda'(\mathcal{G},n)$ is contained in some $T\in \mathcal{C}$. Therefore, letting $\Lambda_T$ be the collection of colored subsets contained in $T$ we have by \Cref{measprop}
$$\mu(\overline{\beta},\Lambda'(\mathcal{G},n))\le \sum_{T \in \mathcal{C}}\mu(\overline{\beta},\Lambda_T)=\sum_{T \in \mathcal{C}}e^{\omega(\overline{\beta},T)}\le e^{o(1)\binom{n}{n/2}}e^{(\omega_{crit}(\overline{\beta})+\alpha)\binom{n}{n/2}}.$$
\end{proof}

\section{Balanced Supersaturation}
In this section, we prove \Cref{balancelem}. To do this, we prove a series of technical results adapted from \cite{MorrisChain} for our purposes. We fix a template $T$ for the remainder of this section, and recall that by hypothesis $\mathcal{G}_k$ contains all colored chains of length $k$.

\begin{defn}
Define the following random variables on a uniformly chosen random maximal chain $C$ in $\mathcal{P}([n])$. Let
$$X(C)=\sum_{x \in C}\log(1+|T(x)|_{\overline{\beta}}),$$ and $Y(C)$ be the total number of colored subchains of $C$ contained in $T$ which appear as a colored chain in $\mathcal{G}$.
\end{defn}

\begin{defn}
For $x \in \mathcal{P}([n])$ define the constant $X^x = \log(1+|T(x)|_{\overline{\beta}})$. Define the following random variables on a uniformly chosen random maximal length chain $C$ in $\mathcal{P}([n])$ whose top element is $x$. Let $Y^x(C)$ be the number of colored subchains of $C$ contained in $T$ whose top element is $x$ and appear as a colored chain in $\mathcal{G}$.
\end{defn}

%The following lemma is adapted from \cite{MorrisChain} to the colored weighted setting.

\begin{lem}
\label{averagelem}
There are constants $C_1,C_2>0$ independent of $n$ such that the following is true. For any $\alpha \in (0,C_2)$, if $T$ is a template with ${{\omega(\overline{\beta},T)}}\ge (\omega_{crit}(\overline{\beta})+\alpha)\binom{n}{n/2}$, then there exists a vertex $x\in \on{Supp}(T)$ such that
$$\mathbb{E}Y^x \ge C_1\alpha.$$
\end{lem}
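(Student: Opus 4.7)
The plan is an averaging argument over uniformly random maximal chains in $\mathcal{P}([n])$, combined with a greedy color-removal procedure on each chain.

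First, I would reuse the weighted chain statistic from the proof of \Cref{largeprop}: setting $Z(C) := \sum_{x \in C}\binom{n}{|x|} X^x$, linearity yields $\mathbb{E}_C Z(C) = \omega(\overline{\beta}, T)$, and since $\binom{n}{|x|}\le\binom{n}{n/2}$ for all $x$, we have $Z(C) \le \binom{n}{n/2} X(C)$. Consequently $\mathbb{E}_C X(C) \ge \omega(\overline{\beta}, T)/\binom{n}{n/2} \ge \omega_{crit}(\overline{\beta}) + \alpha$.

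Next, for each chain $C$ I would relate $X(C)$ and $Y(C)$ via the following greedy procedure: while $T|_C$ is invalid, pick any bad colored subchain $B$ of $T|_C$ and delete one of the color-vertex pairs appearing in $B$. This terminates in a valid subtemplate on the chain $C$, which by definition of $\omega_{crit}$ has weight at most $\omega_{crit}(\overline{\beta})$. Each iteration reduces $\omega(\overline{\beta}, T|_C) = X(C)$ by at most $\kappa := \log(1 + \sum_{i=1}^m \beta_i)$, and each iteration is charged to a distinct bad subchain (the $B$ that motivated it is no longer present after the removal). Hence $Y(C) \ge (X(C) - \omega_{crit}(\overline{\beta}))^+/\kappa$, and taking expectations gives $\mathbb{E}_C Y(C) \ge \alpha/\kappa$.

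Third, I would decompose $Y(C) = \sum_{x \in C} Y_x(C)$ where $Y_x(C)$ counts bad colored subchains of $C$ with top element $x$. Since $Y_x(C)$ depends only on $C|_{\le x}$, and conditionally on $x \in C$ this initial segment is a uniformly random maximal chain from $\emptyset$ to $x$, we have $\mathbb{E}[Y_x(C) \mid x \in C] = \mathbb{E} Y^x$. Combined with $\mathbb{P}(x \in C) = \binom{n}{|x|}^{-1}$, the previous inequality becomes
\[\sum_{x \in \on{Supp}(T)}\binom{n}{|x|}^{-1}\mathbb{E} Y^x \ge \alpha/\kappa.\]

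The main obstacle is the final step: extracting a single vertex $x$ with $\mathbb{E} Y^x \ge C_1\alpha$ from this weighted sum, where $C_1$ must be independent of $n$. I would exploit the hypothesis $\alpha < C_2$: after first truncating $T$ so that $\omega(\overline{\beta}, T) = (\omega_{crit}(\overline{\beta})+\alpha)\binom{n}{n/2} + O(1)$, the inequality $\omega(\overline{\beta}, T) \ge \log(1+\min_i\beta_i)\cdot|\on{Supp}(T)|$ forces $|\on{Supp}(T)| = O(\binom{n}{n/2})$. Tail estimates on binomial coefficients then allow a further restriction of $T$ to a middle band of layers on which $\binom{n}{|x|}^{-1}$ is comparable to $\binom{n}{n/2}^{-1}$, at the cost of only a small fraction of the weight excess $\alpha$; pigeonholing over the $O(\binom{n}{n/2})$ supported vertices in this middle band then produces the desired $x$. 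The delicate point of the argument is balancing the truncation loss against the growth of $\binom{n}{|x|}^{-1}$ as $|x|$ moves away from $n/2$, and it is here that the constant upper bound $C_2$ on $\alpha$ is essential.
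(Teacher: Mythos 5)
Your first three steps are correct and mirror the paper's setup: you obtain $\mathbb{E}X \ge \omega(\overline{\beta},T)/\binom{n}{n/2}$, prove the greedy removal bound (which appears in the paper as $X(C)-C_3Y(C)\le\omega_{crit}(\overline{\beta})$ with $C_3=\log(1+\sum\beta_i)$), and correctly decompose $\mathbb{E}Y=\sum_x\binom{n}{|x|}^{-1}\mathbb{E}Y^x$. But your fourth step has a genuine quantitative gap that the band-restriction idea cannot close.

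The difficulty is the following tension. If you want $\binom{n}{|x|}^{-1}$ to be within a constant factor of $\binom{n}{n/2}^{-1}$ inside the band, the band must have width $O(\sqrt{n})$ with an absolute constant. But the support of $T$ (after truncation to weight $(\omega_{crit}(\overline{\beta})+\alpha)\binom{n}{n/2}+O(1)$, which indeed has $|\on{Supp}(T)|=\Theta(\binom{n}{n/2})$) can live almost entirely in layers at distance $\Theta(\sqrt{n\log n})$ from $n/2$, each of size $\binom{n}{n/2}/\on{poly}(n)$; such $T$ easily achieve the required weight. Cutting to an $O(\sqrt{n})$-wide band would then delete almost all of $T$'s weight, not a small fraction of the excess $\alpha\binom{n}{n/2}$. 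Conversely, if you widen the band to capture all such support, $\binom{n}{n/2}/\binom{n}{|x|}$ grows polynomially in $n$, and the pigeonhole only yields $\mathbb{E}Y^x\gtrsim\alpha/\on{poly}(n)$, not $C_1\alpha$ with $C_1$ independent of $n$. Even the crude bound $\sum_{x\in\on{Supp}(T)}\binom{n}{|x|}^{-1}\le n+1$ (one per layer) shows the lossy factor is intrinsically of order $n$ once you let the support spread across layers.

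The paper sidesteps this entirely with a sign trick: it argues by contradiction, assuming $\mathbb{E}Y^x<C_1\alpha$ for all $x$. The hypothesis $\alpha<C_2$ (with $C_2\le C_3^{-1}C_1^{-1}\log(1+\min_i\beta_i)$) then forces $X^x-C_3\mathbb{E}Y^x>0$ \emph{termwise} for every $x\in\on{Supp}(T)$, because $X^x\ge\log(1+\min_i\beta_i)>C_3C_1\alpha>C_3\mathbb{E}Y^x$. Positivity of each summand lets one replace every weight $\binom{n}{|x|}^{-1}$ by the uniformly smallest one $\binom{n}{n/2}^{-1}$ without losing the lower bound, giving
$$\omega_{crit}(\overline{\beta})\ge\mathbb{E}(X-C_3Y)\ge\frac{1}{\binom{n}{n/2}}\sum_{x\in\on{Supp}(T)}(X^x-C_3\mathbb{E}Y^x)\ge\frac{\omega(\overline{\beta},T)}{\binom{n}{n/2}}\Bigl(1-C_4^{-1}C_3C_1\alpha\Bigr),$$
which yields a contradiction once $C_1$ is tuned. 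No concentration of the support to a middle band is needed. To fix your proof you would have to adopt this sign-positivity argument (or supply some other mechanism that is genuinely robust to support spread over many layers); the truncation-plus-band strategy as written does not produce a constant $C_1$.
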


\begin{proof}
Take $C_3=\log(1+\sum_{i=1}^m \beta_i)$, $C_4=\log(1+\min(\beta_i))$ and take $C_1,C_2$ to be \begin{align*}
C_1&=C_3^{-1}C_4\frac{1}{2\omega_{crit}(\overline{\beta})}\\
    C_2&=\min\{C_3^{-1}C_1^{-1}\log(1+\min(\beta_i)),\omega_{crit}(\overline{\beta})\}.
\end{align*} First note that
$$X(C)-C_3Y(C) \le \omega_{crit}(\overline{\beta}).$$
Indeed, while $X(C)>\omega_{crit}(\overline{\beta})$ we can find a forbidden colored subchain of $C$ contained in $T$. Deleting one by one vertices of $C$ from forbidden subchains, $Y(C)$ decreases each time by at least $1$ and $X(C)$ decreases each time by at most $C_3$.

Now suppose for the sake of contradiction that $\mathbb{E}Y^x < C_1\alpha$ for all $x\in \on{Supp}(T)$. If for any $x\in \on{Supp}(T)$ we have $X^x \le C_3\mathbb{E}Y^x$, we obtain the contradiction (recalling $\alpha<C_2$)
$$\log(1+\min(\beta_i))\le X^x \le C_3\mathbb{E}Y^x < C_3C_1\alpha<\log(1+\min(\beta_i)).$$
Hence we have $X^x > C_3\mathbb{E}Y^x$ for all $x\in\on{Supp}(T)$. Writing $X$ and $Y$ as a sum of indicator functions and using linearity of expectation we have $\mathbb{E}(X)=\sum_{x \in \on{Supp}(T)}\frac{1}{\binom{n}{|x|}}X^x$, and $\mathbb{E}Y=\sum_{x \in \on{Supp}(T)}\frac{1}{\binom{n}{|x|}}\mathbb{E}Y^x$. Thus we obtain the contradiction

\begin{align*}
    \omega_{crit}(\overline{\beta})&\ge \mathbb{E}(X-C_3Y)=\sum_{x \in \on{Supp}(T)}\frac{1}{\binom{n}{|x|}}(X^x-C_3\mathbb{E}Y^x)\\&\ge \sum_{x \in \on{Supp}(T)}\frac{1}{\binom{n}{n/2}}(X^x-C_3\mathbb{E}Y^x)
    \ge\frac{\omega(T)}{\binom{n}{n/2}}-\frac{|\on{Supp}(T)|}{\binom{n}{n/2}}C_3C_1\alpha\\
    &\ge \frac{\omega(T)}{\binom{n}{n/2}}(1-C_4^{-1}C_3C_1\alpha)\ge (\omega_{crit}(\overline{\beta})+\alpha)(1-\frac{1}{2\omega_{crit}(\overline{\beta})}\alpha)\\&=\omega_{crit}(\overline{\beta})+\frac{1}{2}\alpha(1-\frac{\alpha}{\omega_{crit}(\overline{\beta})})\\
    &> \omega_{crit}(\overline{\beta}).
\end{align*}

\end{proof}

The following lemma, inspired by a corresponding lemma from \cite{MorrisChain} (adapted from an argument of \cite{Sudakov}),
%, adapted from \cite{MorrisChain} and \cite{Sudakov},
gives us very good control over the number of colored chains below a given vertex. It is surprising that given our non-transitive family $\mathcal{G}$ of forbidden chains that we still retain such excellent control.

\begin{lem}
\label{boundlem}
There is a constant $Q\ge 0$ independent of $n$ such that the following is true. For any $x \in X$, $i \le k$, and $C$ a chain of maximal length whose top element is $x$, let $Z^x_{c_1 \succ \ldots \succ c_i}(C)$ be the number of colored subchains of $C$ of order $i$ {{contained in $T$}}, whose top element is $x$ and is colored $c_1 \succ \ldots \succ c_i$. Then
$$Z^x_{c_1\succ \ldots \succ c_i}(C) \le Q+Y^x(C).$$
\end{lem}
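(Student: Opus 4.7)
The plan is to reduce this lemma to a routine unimodality fact about binomial coefficients, after stripping away essentially all the color data on both sides of the inequality. Let
\[
D := \{y \in C : y \prec x \text{ and } T(y) \neq \emptyset\}.
\]
If $c_1 \notin T(x)$ then $Z^x_{c_1 \succ \ldots \succ c_i}(C) = 0$ and the inequality is trivial, so I will assume $c_1 \in T(x)$ throughout.

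First, I will upper bound $Z^x$. Each subchain contributing to $Z^x_{c_1 \succ \ldots \succ c_i}(C)$ is determined by a choice of $i-1$ elements $y_2 \succ \ldots \succ y_i$ strictly below $x$ on $C$ with $c_j \in T(y_j)$; in particular each $y_j$ lies in $D$, and since $C$ is totally ordered the sequence is pinned down by its underlying $(i-1)$-subset of $D$. Dropping the color-availability constraints gives
\[
Z^x_{c_1 \succ \ldots \succ c_i}(C) \;\le\; \binom{|D|}{i-1}.
\]

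Second, I will lower bound $Y^x$ using the hypothesis that $\mathcal{G}_k$ contains every colored $k$-chain. Every $(k-1)$-subset of $D$ together with $x$ yields an uncolored $k$-chain in $T$ with top $x$; picking any color in $T(y_j)$ at each $y_j$ and the color $c_1$ at $x$ produces a colored $k$-chain which automatically lies in $\mathcal{G}_k$ and hence contributes to $Y^x(C)$. Thus
\[
Y^x(C) \;\ge\; \binom{|D|}{k-1}.
\]

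Combining the two estimates reduces the lemma to showing $\binom{|D|}{i-1} \le Q + \binom{|D|}{k-1}$ for a suitable constant $Q = Q(k)$. For $|D| \ge 2(k-1)$, unimodality of $\binom{|D|}{\cdot}$ immediately gives $\binom{|D|}{i-1} \le \binom{|D|}{k-1}$ since $i-1 \le k-1 \le |D|/2$; for the finitely many smaller values of $|D|$, the left side is bounded by a constant depending only on $k$, which we absorb into $Q$. I do not foresee a real obstacle — the interesting feature, matching the authors' remark on non-transitivity, is that the single input of completeness of $\mathcal{G}_k$ is by itself strong enough to discard all the intricate color constraints on both sides in one stroke, bypassing the subtler (and potentially non-transitive) structure of $\mathcal{G}_j$ for $j<k$.
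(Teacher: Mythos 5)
Your proof is correct and takes essentially the same approach as the paper: both use the completeness of $\mathcal{G}_k$ to lower-bound $Y^x(C)$ by a degree-$(k-1)$ polynomial in the number of $T$-supported elements of $C$ below $x$, upper-bound $Z^x_{c_1\succ\ldots\succ c_i}(C)$ by a degree-$(i-1)$ polynomial in that same quantity, and absorb the finitely many small cases into $Q$. The only cosmetic difference is that the paper lower-bounds $Y^x$ via monochromatic chains in the most frequent color (giving $\binom{(s-1)/m}{k-1}$), whereas you pick an arbitrary available color at each vertex (giving the cleaner $\binom{|D|}{k-1}$), which lets you finish by a direct unimodality comparison.
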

\begin{proof}
Recall that $\mathcal{G}_k$ contains all chains of length $k$. Therefore, for $i=k$ any $Q\ge 0$ works. If $i<k$, it is enough to ensure that $Q$ satisfies
$$Z^x_{c_1 \succ \ldots \succ c_i}(C) \le Q+\sum_{j=1}^m Z^x_{c_1 \succ \underbrace{\scriptstyle j \succ \ldots \succ j}_{k-1}}(C).$$
Indeed, if we can show this then by using the trivial bound $$  Q+\sum_{j=1}^m Z^x_{c_1 \succ \underbrace{\scriptstyle j \succ \ldots \succ j}_{k-1}}(C) \le Q+Y^x_k(C),$$ the conclusion follows immediately. 

If $c_1\not\in T(x)$ then the result is trivially true for any choice of $Q\ge 0$, so we assume that $c_1$ is a valid choice at $x$. Denoting $s=|C\cap\on{Supp}(T)|$, we have the trival bounds $$Z^x_{c_1 \succ \ldots \succ c_i}(C) \le (s-1)^{i-1} \text{ and }
\sum_{i=1}^m Z^x_{c_1 \succ \underbrace{\scriptstyle j \succ \ldots \succ j}_{k-1}}(C) \ge \binom{\frac{s-1}{m}}{k-1},$$
where the second bound follows by observing that the most frequent colour on $C$ appears at least $\frac{s-1}{m}$ times.

Thus it suffices to take $Q$ such that
$$(s-1)^{i-1}\le Q+\binom{\frac{s-1}{m}}{k-1}$$ for every $1\le i<k$, and every $s \ge 1$.
\end{proof}

\begin{proof}[Proof of \Cref{balancelem}]
We build an auxiliary sub-hypergraph $\mathcal{H}$ of $T$ one edge at a time, ensuring with each new edge that $\Delta_j(\mathcal{H}_\ell)\le (\delta n)^{\ell-j}$ holds for all $\ell$ and $1 \le j \le \ell$, until for some $\ell$ we have $e(\mathcal{H}_\ell)\ge \delta^{\ell-1}n^\ell\binom{n}{n/2}$, and then we output $\mathcal{H}_\ell$.\footnote{This is analogous to \cite{MorrisChain}, except since they worked in a single uniformity they could directly construct their final hypergraph without using an auxiliary hypergraph.}  In particular we assume that at the current stage $e(\mathcal{H}_\ell)<\delta^{\ell-1}n^\ell \binom{n}{n/2}$ for all $\ell$. Note that given a colored chain $B$ contained in $T$ that also appears in $\mathcal{G}$, if it cannot be added to $\mathcal{H}$, then there exists a colored subchain $B'$ satisfying $d_{\mathcal{H}_\ell}(B')=(\delta n)^{\ell-|B'|}$ for some $\ell$, so adding $B$ to $\mathcal{H}$ would violate the codegree condition. We will implicitly find such a colored chain $B$ which we can add to $\mathcal{H}$ by constructing it one vertex at a time from the top down, ensuring that no codegree condition among the subsets of $B$ is violated at each step. The following claim shows that there are very few ways of extending a ``good'' $B$ to a ``bad'' $B$ with the addition of a vertex.

%The following claim, adapted from \cite{MorrisChain}, shows that there are very few ways for a vertex in a chain to violate one of the codegree conditions.

\begin{clm}
\label{boundclm}
Given an $i$-chain $x_1 \supsetneq \ldots \supsetneq x_i$ with $x_j$ colored by $c_j \in T(x_j)$, then there are at most $O(1) \delta n$ choices for $(x_{i+1},c_{i+1})$ with $x_{i}\supsetneq x_{i+1}$ and $c_{i+1}\in T(x_{i+1})$ such that there exists some $2 \le \ell \le k$, and \emph{nonempty} $A \subset \{(x_1,c_1),\ldots,(x_i,c_i)\}$ with $|A|\le \ell-1$ such that $d_{\mathcal{H}_\ell}(A \cup \{(x_{i+1},c_{i+1})\})=(\delta n)^{\ell-(|A|+1)}$.
\end{clm}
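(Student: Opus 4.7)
The plan is a codegree double-counting combined with a union bound over the finitely many pairs $(\ell, A)$. For a fixed $2 \le \ell \le k$ and a nonempty $A \subset \{(x_1, c_1), \ldots, (x_i, c_i)\}$ with $|A| \le \ell - 1$, let $N_{\ell, A}$ denote the number of candidate extensions $v = (x_{i+1}, c_{i+1})$ (with $x_i \supsetneq x_{i+1}$ and $c_{i+1} \in T(x_{i+1})$) that are \emph{bad for this particular pair}, meaning $d_{\mathcal{H}_\ell}(A \cup \{v\}) = (\delta n)^{\ell - |A| - 1}$. The set of bad extensions appearing in the claim is the union of these sets over $(\ell, A)$, so it suffices to bound $N_{\ell, A}$ for each pair and sum.

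For the per-pair bound I would apply the standard double-counting identity
$$\sum_{v \in v(\mathcal{H}_\ell)} d_{\mathcal{H}_\ell}(A \cup \{v\}) = (\ell - |A|)\cdot d_{\mathcal{H}_\ell}(A),$$
obtained by counting pairs (edge $e \supset A$, vertex $v \in e \setminus A$). Restricting the left-hand side to the bad $v$ only decreases it, giving $N_{\ell, A}\cdot (\delta n)^{\ell - |A| - 1}$ as a lower bound. Using the invariant $d_{\mathcal{H}_\ell}(A) \le \Delta_{|A|}(\mathcal{H}_\ell) \le (\delta n)^{\ell - |A|}$ that is being maintained throughout the construction, the right-hand side is at most $\ell (\delta n)^{\ell - |A|}$. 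Dividing yields $N_{\ell, A} \le \ell \delta n \le k \delta n$.

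To conclude I would observe that since every chain in $\mathcal{G}$ has length at most $k$, every chain we are attempting to build has length at most $k$, so $i \le k - 1$. Therefore the number of pairs $(\ell, A)$ is at most $(k-1) \cdot 2^{k-1}$, a constant depending only on $k$. Summing gives a total of at most $O(1)\cdot k\delta n = O(1) \delta n$ bad extensions across all pairs, which is exactly the claim.

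I do not expect any substantive obstacle here: this is a one-line handshake identity followed by a trivial union bound, with the per-pair estimate matching the codegree invariant so precisely that a factor of $\delta n$ is exactly what is left over. The only small bookkeeping point to flag is the edge case $|A| = \ell - 1$, where the threshold $(\delta n)^{\ell - |A| - 1} = 1$ and the bad condition degenerates to ``$A \cup \{v\}$ is contained in some edge of $\mathcal{H}_\ell$''; the same chain of inequalities still works, and the constraint that $v$ has the form $(x_{i+1}, c_{i+1})$ with $x_i \supsetneq x_{i+1}$ can only shrink the count.
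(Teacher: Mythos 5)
Your proposal is correct and follows essentially the same argument as the paper: fix a pair $(\ell, A)$, double-count the pairs consisting of an edge $e \supseteq A$ and a vertex $v \in e \setminus A$, use the maintained invariant $\Delta_{|A|}(\mathcal{H}_\ell) \le (\delta n)^{\ell - |A|}$ to bound the total, divide by the threshold $(\delta n)^{\ell - |A| - 1}$ to get $O(\delta n)$ bad extensions per pair, and then sum over the $O_k(1)$ pairs $(\ell, A)$. The paper phrases the count as a disjoint union of edge-sets (with each edge repeated at most $k$ times) rather than via the handshake identity, but the two formulations are the same double-count.
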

\begin{proof}
Fix some $A \subset \{(x_1,c_1),\ldots,(x_i,c_i)\}$, $2 \le \ell \le k$,  and let $\mathcal{B}$ be the set of all $(x_{i+1},c_{i+1})$ with $x_{i+1}\subsetneq x_i$ and $c_{i+1} \in T(x_{i+1})$ and $d_{\mathcal{H}_\ell}(A \cup \{(x_{i+1},c_{i+1})\})=(\delta n)^{\ell-(|A|+1)}$. The disjoint union
$$\bigsqcup_{(x_{i+1},c_{i+1})\in \mathcal{B}}\text{hyperedges of $\mathcal{H}_\ell$ containing $A \cup \{(x_{i+1},c_{i+1})\}$}$$
has size $(\delta n)^{\ell-(|A|+1)}$, and each edge appears at most $k$ times. Ignoring repeats, this is a collection of hyperedges of $\mathcal{H}_\ell$ containing $A$, so has at most $\Delta_{|A|}(\mathcal{H}_\ell) \le (\delta n)^{\ell-|A|}$ distinct elements. Therefore, we conclude that $|\mathcal{B}| \le k\delta n$. Summing over all choices of $A,\ell$ we obtain the desired result.
\end{proof}

We remark that the claim (via the nonemptiness condition on $A$) does not take into account the possibility that the new colored vertex $v$ added to $B$ violates the $\Delta_1(\mathcal{H}_\ell)$ condition for the singleton $\{v\}$. However, we will explicitly deal with this possibility by disregarding such colored vertices.

We continue the proof of \Cref{balancelem} along the lines of \cite{MorrisChain}, which we include for completeness (and rephrase in terms of random variables for convenience). By a double counting argument, the number of colored vertices $v$ with $d_{\mathcal{H}_\ell}(\{v\})=(\delta n)^{\ell-1}$ is at most $\ell e(\mathcal{H}_\ell)/(\delta n)^{\ell-1}\le \ell \delta \binom{n}{n/2}$. Omitting these colored vertices, and assuming we take $\delta<\alpha/(2\ell\log(1+\sum_{i=1}^m \beta_i))$, we obtain a template $T'$ with $\omega(\overline{\beta},T')\ge (\omega_{crit}+\frac{\alpha}{2})\binom{n}{n/2}$. We will now show there is a colored chain contained in $T'$ which we can add to $\mathcal{H}$. For the remainder of the proof we take all random variables with respect to $T'$ rather than $T$.

By \Cref{averagelem} applied to $T'$, we can take $x\in \on{Supp}(T')$ minimal such that $\mathbb{E}Y^x \ge C_1\frac{\alpha}{2}$. Then for $x' \subsetneq x$, and $c_1,\ldots,c_j$ with $j \le k$ by \Cref{boundlem} and the minimality of $x$ we have
$$\mathbb{E}Z^{x'}_{c_1 \succ \ldots \succ c_j} \le Q+\mathbb{E}Y^{x'} \le Q+C_1\frac{\alpha}{2}$$
and
$$\mathbb{E}Z^x_{c_1 \succ \ldots \succ c_j} \le Q+\mathbb{E}Y^x.$$
Our goal is to find a colored chain with $x$ as its uncolored top element contained in $T'$ that we can add to $\mathcal{H}$ without violating any of the codegree conditions. To do this, we write $Y^x=Y^x_{bad}+Y^x_{good}$ where $Y^x_{bad}$ only counts colored chains contained in $T'$ we are not allowed to add to $\mathcal{H}$. It suffices to prove the upper bound
\begin{align*}\mathbb{E}Y^x_{bad}&\le \sum_\ell\sum_{c_1,\ldots,c_l}\sum_{1 \le i \le \ell} \mathbb{E}Z^x_{c_1 \succ \ldots \succ c_i}\cdot \left(O(1)\delta n  \frac{1}{n/3}\right)\cdot\max_{x'\subsetneq x}\mathbb{E}Z^{x'}_{c_{i+1}\succ \ldots \succ c_\ell}.
\end{align*}

Indeed, by the above the right hand side is bounded above by $(Q+\mathbb{E}Y^x)(O(1)\delta)(Q+C_1\frac{\alpha}{2})$, and by choosing $\delta$ sufficiently small in terms of $\alpha$ and the absolute constant $Q$ (independent of $\mathbb{E}Y^x$), we can guarantee this is strictly less than $\mathbb{E}Y^x$ (using the fact that $\mathbb{E}Y^x \ge C_1\frac{\alpha}{2}$). Therefore $Y^x_{good}$ is not identically zero and we can find a new hyperedge to add to $\mathcal{H}$.

To do this we first similarly split $Z^x_{c_1,\ldots,c_\ell}=Z^x_{c_1,\ldots,c_\ell,bad}+Z^x_{c_1,\ldots,c_\ell,good}$, and write $$Y^x_{bad}(C)=\sum_\ell\sum_{(c_1\succ\ldots\succ c_\ell)\in \mathcal{G}_\ell}Z^x_{c_1,\ldots,c_\ell,bad}(C).$$
Next, we upper bound
$$Z^x_{c_1,\ldots,c_\ell,bad}(C)\le \sum_i\sum_{x_i \subsetneq x}\sum _{x_{i+1}\subsetneq x_i}Z^x_{c_1,\ldots,c_\ell,bad,i,x_i,x_{i+1}}(C)$$ where $Z^x_{c_1,\ldots,c_\ell,bad,i,x_i,x_{i+1}}$ counts those colored subchains of $C$ contained in $T'$, whose top element is $x$, colored $c_1 \succ \ldots \succ c_\ell$, such that the $i$'th and $i+1$'st elements from the top are precisely at the locations $x_i,x_{i+1}$ respectively, and furthermore that $x_{i+1}$ along with some subset of the colored elements of the chain above it violate some codegree condition. 

To bound the expectation of the right hand side of this triple sum, we first note that
\begin{align*}\mathbb{E}Z^x_{c_1,\ldots,c_\ell,bad,i,x_i,x_{i+1}}&=(\mathbb{E}Z^{x_{i+1}}_{c_{i+1}\succ \ldots \succ c_\ell})\mathbb{E}Z^x_{c_1,\ldots,c_{i+1},bad,i,x_i,x_{i+1}}\\&\le (\max_{x' \subsetneq x}\mathbb{E}Z^{x'}_{c_{i+1}\succ \ldots \succ c_\ell})\cdot \mathbb{E}Z^x_{c_1,\ldots,c_{i+1},bad,i,x_i,x_{i+1}}\end{align*}
By \Cref{boundclm}, we now have
$$\sum_{x_{i+1}\subsetneq x_i}\mathbb{E}Z^x_{c_1,\ldots,c_{i+1},bad,i,x_i,x_{i+1}} \le O(1)\delta n \frac{1}{n/3}\mathbb{E}Z^x_{c_1,\ldots,c_i,x_i}$$
where $Z^x_{c_1,\ldots,c_i,x_i}(C)$ counts those colored subchains of $C$ contained in $T'$, whose top element is $x$, colored $c_1 \succ \ldots \succ c_i$, such that the bottom element is $x_i$. 

Finally, note that $$\sum_{x_i\subsetneq x} Z^x_{c_1,\ldots,c_i,x_i}(C)=Z^x_{c_1 \succ \ldots \succ c_i}(C).$$ 

Putting this all together now yields the desired inequality.

\end{proof}

\bibliography{refs}
\bibliographystyle{alpha}

\end{document}